\documentclass[12pt,reqno]{amsart}
\usepackage{indentfirst, amssymb, amsmath, amsthm, mathrsfs, setspace, indentfirst, enumerate,  mathrsfs, amsmath, amsthm}
\usepackage[bookmarksnumbered, colorlinks, plainpages]{hyperref}
\usepackage{mathrsfs}
\usepackage{cite}
\usepackage{tikz}
\usepackage{graphicx}
\usetikzlibrary{arrows.meta,calc,decorations.pathreplacing,positioning}
\usetikzlibrary{shapes.geometric}
\usepackage{float}
\usepackage{pgfplots}
\usetikzlibrary{shadows}

\usepgfplotslibrary{fillbetween}
\definecolor{curveblue}{RGB}{30,90,220}
\definecolor{negred}{RGB}{255,182,193}
\definecolor{posgreen}{RGB}{40,150,70}
\definecolor{rootdark}{RGB}{120,40,20}
\usepgfplotslibrary{fillbetween}
\definecolor{c1}{RGB}{70,40,200}

\definecolor{c2}{RGB}{213,14,0}

\definecolor{c3}{RGB}{0,158,10}

\definecolor{c4}{RGB}{230,126,34}
\newtheorem*{theoA}{Theorem A}
\newtheorem*{theoB}{Theorem B}
\newtheorem*{theoC}{Theorem C}
\newtheorem*{theoD}{Theorem D}
\newtheorem*{theoE}{Theorem E}
\newtheorem*{theoF}{Theorem F}
\newtheorem*{theoG}{Theorem G}
\newtheorem*{theoH}{Theorem H}

\newtheorem*{BC}{Bieberbach's conjecture}
\newtheorem*{ProA}{Proposition A}

\newtheorem*{cor A}{Corollary A}
\newtheorem*{cor B}{Corollary B}

\newtheorem{theo}{Theorem}[section]
\newtheorem{lem}{Lemma}[section]

\newcommand{\ol}{\overline}
\newcommand{\be}{\begin{equation}}
	\newcommand{\ee}{\end{equation}}
\newcommand{\beas}{\begin{eqnarray*}}
	\newcommand{\eeas}{\end{eqnarray*}}
\newcommand{\bea}{\begin{eqnarray}}
	\newcommand{\eea}{\end{eqnarray}}

\numberwithin{equation}{section}

\begin{document}

\title[O\MakeLowercase {n} \MakeLowercase{Coefficient Theorems for Starlike and Quasi}......]{\LARGE O\Large\MakeLowercase {n} \MakeLowercase{Coefficient Theorems for Starlike and Quasi-Convex of Type} $B$ \MakeLowercase{Mappings in Complex} B\MakeLowercase {anach Spaces}}

\date{}
\author[J. Sok\'o{\l}, S. Majumder, {and} G. Haldar]{Janusz Sok\'o{\l}$^{1}$, Sujoy Majumder$^{2}$ {and}  Goutam Haldar$^{3*}$ }

\address{$^{1}$Faculty of Exact and Technical Sciences, University of Rzesz\'ow, Street: Prof. Pigonia 1, 35-310 Rzesz\'ow, Poland}
\email{jsokol@ur.edu.pl}

\address{$^{2}$Sujoy Majumder, Department of Mathematics, Raiganj University, Raiganj, West Bengal-733134, India.}
\email{sm05math@gmail.com, sjm@raiganjuniversity.ac.in}

\address{$^3$Goutam Haldar, Department of Mathematics, Ghani Khan Choudhury Institute of Engineering and Technology, Narayanpur, Malda-732141, West Bengal, India.}
\email{goutamiitm@gmail.com, goutamiit1986@gmail.com}


\renewcommand{\thefootnote}{}
\footnote{2020 \emph{Mathematics Subject Classification}: 32H02, 30C45.}
\footnote{\emph{Key words and phrases}: Biholomorphic mapping, Starlike mapping, Quasi-convex  mapping of Type $B$, Coefficient problems.}
\footnote{*\emph{Corresponding Author}: Goutam Haldar.}

\renewcommand{\thefootnote}{\arabic{footnote}}
\setcounter{footnote}{0}

\begin{abstract}In this paper, we study biholomorphic mappings $F: \mathbb{B} \to X$ of the form $F(z) = f(z)z$ for $z \in \mathbb{B}$, where $f: \mathbb{B} \to \mathbb{C}$ is a holomorphic function satisfying $f(0) = 1$ and $X$ is a complex Banach space  with the unit ball $\mathbb{B}$.
The main contributions of this paper are threefold. First, we establish a sufficient coefficient condition for a function $F(z)=zf(z)$ to be starlike and quasi-convex of type $B$ in a complex Banach space. Second, we prove that the Bieberbach conjecture holds for these classes of functions. Finally, we analyze the difference of coefficients for such functions in this setting. The results established here directly extend the previous results to the broader setting of complex Banach spaces.
\end{abstract}

\thanks{Typeset by \AmS -\LaTeX}
\maketitle

\section{\bf Introduction}

Let $\mathcal{S}(\mathbb{U})$ denote the class of functions $f$ that are analytic and univalent in the open unit disk $\mathbb{U} = \{z \in \mathbb{C}: \vert{}z\vert{} < 1\}$, normalized by $f(0) = 0$ and $f'(0) = 1$. Any function $f \in \mathcal{S}(\mathbb{U})$ has a power series expansion of the form:
\begin{align}\label{eq1}
\displaystyle
	f(z) = z + \sum_{n=2}^{\infty} a_n z^n, \quad z \in \mathbb{U}.
\end{align}

By employing the Area Theorem, Bieberbach \cite{Bieberbach-1916} established that if $f \in \mathcal{S}(\mathbb{U})$, then $\lvert a_2 \rvert \leq 2$. The following theorem, introduced in 1916, laid the foundation for the celebrated \textbf{Bieberbach conjecture}.

\begin{theoA}{\em{\cite{Bieberbach-1916}}} If $f \in \mathcal{S}(\mathbb{U})$, then $|a_2|\leq 2$. Equality holds if, and
only if, $f$ is a rotation of the Koebe function.
\end{theoA}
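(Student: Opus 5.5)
The plan is the classical route through Gronwall's Area Theorem, applied to an odd square-root transform of $f$.

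\textbf{Area Theorem.} Let $g(\zeta)=\zeta+b_0+\sum_{n\ge1} b_n\zeta^{-n}$ be univalent in $\{|\zeta|>1\}$. Then $\sum_{n\ge1} n|b_n|^2\le 1$. To prove it, fix $r>1$ and compute the area enclosed by the image curve $g(\{|\zeta|=r\})$ via Green's formula, $\frac{1}{2i}\oint \bar w\,dw$. Substituting the Laurent series and using orthogonality gives
\[
\pi\Big(r^2-\sum_{n\ge1} n|b_n|^2 r^{-2n}\Big).
\]
This area is nonnegative because univalence makes the image a Jordan curve. Letting $r\to1^+$ yields the inequality, and in particular $|b_1|\le1$.

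\textbf{Square-root transform.} For $f\in\mathcal{S}(\mathbb{U})$, $f(z)/z$ is nonvanishing, so we may define
\[
h(z)=\sqrt{f(z^2)}=z\sqrt{f(z^2)/z^2}=z+\tfrac{a_2}{2}z^3+\cdots.
\]
This $h$ is odd and holomorphic. It is univalent: if $h(z_1)=h(z_2)$, then $f(z_1^2)=f(z_2^2)$, so $z_1=\pm z_2$, and oddness of $h$ excludes $z_1=-z_2\neq0$.

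\textbf{Inversion and conclusion.} Set $g(\zeta)=1/h(1/\zeta)$. Then $g$ is univalent in $|\zeta|>1$ and
\[
g(\zeta)=\zeta-\tfrac{a_2}{2}\zeta^{-1}+\cdots.
\]
The Area Theorem gives $|a_2/2|\le1$, that is, $|a_2|\le2$.

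\textbf{Equality.} If $|a_2|=2$, then $|b_1|=1$, and the Area Theorem forces $b_n=0$ for every $n\ge2$. Hence $g(\zeta)=\zeta-e^{i\theta}\zeta^{-1}$. Inverting back gives
\[
h(z)=\frac{z}{1-e^{i\theta}z^2}\quad\text{and}\quad f(z)=\frac{z}{(1-e^{i\theta}z)^2},
\]
which is a rotation of the Koebe function. Conversely, the Koebe function $z/(1-z)^2$ has $a_2=2$, and its rotations $e^{-i\alpha}k(e^{i\alpha}z)$ have $|a_2|=2$.

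\textbf{Main obstacle.} The delicate step is the area computation. One must justify that the enclosed area is nonnegative, which uses the fact that the image of $|\zeta|=r$ is a simple closed curve and so bounds a region of nonnegative area; this is exactly where univalence enters. One must also pass carefully to the limit $r\to1^+$. The termwise integration itself is routine because the Laurent series converges uniformly on each circle $|\zeta|=r>1$.
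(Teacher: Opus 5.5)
Your proof is correct and follows the route the paper indicates: the paper gives no proof of Theorem A, only the citation to Bieberbach and the remark that it comes from the Area Theorem, which is exactly what you apply to $1/h(1/\zeta)$ with $h(z)=\sqrt{f(z^2)}$. One detail should be made explicit: in the univalence check for $h$, the case $z_1=-z_2\neq0$ forces $h(z_2)=0$, and this is excluded because $f(z^2)\neq0$ for $z\neq0$, not by oddness alone.
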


 Bieberbach's conjecture served as the catalyst for the study of univalent functions, driving its development over the last century into the extensive field of mathematical research we see today.
\begin{BC} If $\in\mathcal{S}(\mathbb{U})$ and is given by \eqref{eq1}, then $|a_n|\leq n$ for any integer $n\geq 2$. Equality holds if and only if $f$ is a rotation of the Koebe function.
\end{BC}

This conjecture evolved into one of the most celebrated problems in twentieth-century mathematics. Progress, however, was gradual: by 1968, it had been verified only for $n = 2, 3, 4, 5,$ and $6$. For $n \geqslant 3$, the proofs demanded sophisticated, non-elementary techniques. The case $n = 3$ was settled in 1923 by Loewner \cite{Loewner-1923}, who developed an ingenious approach based on a differential equation whose solutions form a dense subset of $\mathcal{S}(\mathbb{U})$. Loewner’s parametric method not only resolved the conjecture for $n = 3$, but was also used to determine sharp coefficient bounds for the inverse functions of $f \in \mathcal{S}(\mathbb{U})$ (see, for example, Hayman \cite{Hayman-1994}). Crucially, this theory served as a cornerstone decades later when de Branges completed his celebrated proof of the full Bieberbach conjecture \cite{Branges-1985}.\vspace{1.2mm}

Throughout the twentieth century, research also focused on finding the smallest constant $C$ such that $\vert{}a_n\vert{} < C n$. By 1976, the best-known bound was $C \approx 1.0691$, which remained unimproved for general $n \geqslant 2$ until de Branges proved the full Bieberbach conjecture in 1984.\vspace{1.2mm}

A normalized univalent function $f \in \mathcal{S}(\mathbb{U})$ is said to be starlike (with respect to the origin) if and only if
\begin{align*}
\Re\left\lbrace \frac{z f'(z)}{f(z)} \right\rbrace> 0
\end{align*}
 for all $z \in \mathbb{U}$. The subclass consisting of all such starlike functions is denoted by $\mathcal{S}^*(\mathbb{U})$.

 Let $\mathcal{K}(\mathbb{U})$ denote the familiar subclass of $\mathcal{S}(\mathbb{U})$ consisting of functions which are convex in $\mathbb{U}$. Then $f\in\mathcal{K}(\mathbb{U})$ if and only if
 $\Re\left\lbrace1+\frac{z f''(z)}{f'(z)}\right\rbrace >0$, $z \in \mathbb{U}$.

We begin by recalling a simple sufficient condition on the coefficients of $f(z)$ for starlikeness, due to Clunie and Keogh \cite{Clunie-1960}.

\begin{theoB}{\em{\cite{Clunie-1960}}} Let $f \in \mathcal{S}(\mathbb{U})$ with $f(z) = z + \sum_{n=2}^{\infty} a_n z^n$. If $\sum_{n=2}^{\infty} n \vert{}a_n\vert{} \leqslant 1,$ then $f \in \mathcal{S}^*(\mathbb{U})$.
\end{theoB}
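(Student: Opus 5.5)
The plan is to verify the analytic characterization $\Re\{zf'(z)/f(z)\}>0$ directly from the coefficient hypothesis. For this it suffices to show
\[
\left|\frac{zf'(z)}{f(z)}-1\right|<1,\qquad z\in\mathbb{U}\setminus\{0\},
\]
since a complex number within distance $1$ of $1$ has positive real part; at $z=0$ the quotient equals $1$. Univalence is part of the hypothesis $f\in\mathcal{S}(\mathbb{U})$. It will also follow independently, because the condition makes $\Re f'(z)>0$.

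First I would control the denominator. Write $f(z)=z\bigl(1+\sum_{n\ge2}a_nz^{n-1}\bigr)$. For $|z|=r<1$,
\[
\Bigl|1+\sum_{n\ge2}a_nz^{n-1}\Bigr|\ \ge\ 1-\sum_{n\ge2}|a_n|r^{n-1}\ >\ 1-\sum_{n\ge2}|a_n|,
\]
where the strict inequality holds whenever some $a_n\ne0$. This quantity is at least $1-\tfrac12\sum_{n\ge2} n|a_n|\ge \tfrac12>0$, so $f(z)/z$ never vanishes and the quotient $zf'/f$ is holomorphic in $\mathbb{U}$.

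Next I would compare numerator and denominator. We have $zf'(z)-f(z)=\sum_{n\ge2}(n-1)a_nz^n$, so for $|z|=r$,
\[
\left|\frac{zf'(z)}{f(z)}-1\right|\ \le\ \frac{\sum_{n\ge2}(n-1)|a_n|r^{n-1}}{1-\sum_{n\ge2}|a_n|r^{n-1}}.
\]
The right-hand side is less than $1$ exactly when $\sum_{n\ge2} n|a_n|r^{n-1}<1$. This holds because $r^{n-1}<1$ and $\sum n|a_n|\le1$, and the inequality is strict as long as $f\not\equiv z$. The trivial case $f(z)=z$ is obviously starlike.

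The only delicate point is the boundary case $\sum n|a_n|=1$, where strictness must come from $r<1$. I will track this carefully, noting that at least one $a_n\neq0$ in the non-trivial case. The rest is a routine triangle-inequality estimate.
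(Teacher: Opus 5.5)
Your proof is correct and uses the same triangle-inequality argument as the paper. Theorem B itself is only quoted from Clunie--Keogh, but the paper's proof of Theorem \ref{Th-21} is exactly your computation carried out along the slice $\zeta\mapsto \zeta z_0$: it shows $|g(\zeta)-1|<1$ by bounding $\bigl|\sum_{n\ge2}(n-1)a_n\zeta^{n-1}\bigr|$ from above and $\bigl|1+\sum_{n\ge2}a_n\zeta^{n-1}\bigr|$ from below, which reduces to $\sum_{n\ge2} n|a_n|\le 1$. Your explicit checks that the denominator stays $\ge \tfrac12$ and that strictness comes from $r<1$ are more careful than the paper's version.
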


We next consider the Bieberbach conjecture for the class $\mathcal{S}^*(\mathbb{U})$ of starlike functions. R. Nevanlinna \cite{Nevanlinna-1920} (see also \cite[Theorem 2.14]{Duren-1983}) first established this result in 1920.

\begin{theoC}{\em{\cite{Nevanlinna-1920}}} The coefficients of each function $f\in\mathcal{S}^*(\mathbb{U})$ satisfy $|a_n|\leq n$ for $n=2,3,\ldots$. Strict inequality holds for all $n$ unless $f$ is a rotation of the Koebe function.
\end{theoC}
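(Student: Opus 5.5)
The natural route to Theorem C runs through the Carathéodory class. For $f\in\mathcal{S}^*(\mathbb{U})$ with the expansion \eqref{eq1}, set
\[
p(z)=\frac{zf'(z)}{f(z)}=1+\sum_{n=1}^{\infty}c_nz^n .
\]
Then $p$ is analytic in $\mathbb{U}$ with $p(0)=1$ and $\Re p>0$. The first ingredient is the classical Carathéodory lemma: $|c_n|\le 2$ for all $n\ge 1$. I would prove it via the Herglotz representation $p(z)=\int_{\partial\mathbb{U}}\frac{\zeta+z}{\zeta-z}\,d\mu(\zeta)$ with $\mu$ a probability measure. This gives $c_n=2\int\bar\zeta^n\,d\mu$, hence $|c_n|\le2$.

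For an argument avoiding measures, one can instead use $\Re p(re^{i\theta})>0$ directly. Then
\[
\frac{1}{2\pi}\int_0^{2\pi}\Re p(re^{i\theta})\,d\theta=1,
\]
and $c_nr^n=\frac{1}{\pi}\int_0^{2\pi}\Re p(re^{i\theta})e^{-in\theta}\,d\theta$. Therefore $|c_n|r^n\le2$, and letting $r\to1$ gives $|c_n|\le 2$.

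Next, compare coefficients in $zf'(z)=p(z)f(z)$. With $a_1=1$, this yields the recursion
\[
(n-1)a_n=\sum_{k=1}^{n-1}c_{n-k}a_k,\qquad n\ge2 .
\]
By induction, suppose $|a_k|\le k$ for $k<n$. Then
\[
(n-1)|a_n|\le 2\sum_{k=1}^{n-1}k=n(n-1),
\]
so $|a_n|\le n$. This settles the inequality.

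The main obstacle is the equality statement: strict inequality unless $f$ is a rotation of the Koebe function. I would proceed as follows. If $|a_n|=n$ for some $n\ge2$, every inequality in the induction step must be an equality. In particular $|c_{n-1}|=2$; and the terms $c_{n-k}a_k$ must all have equal arguments with $|c_{n-k}|=2$ and $|a_k|=k$. Equality in $|c_m|=2$ for the Herglotz measure forces $\mu$ to be a point mass $\delta_{\zeta_0}$, because $\left|\int\bar\zeta^m d\mu\right|=1$ forces $\zeta^m$ to be constant $\mu$-a.e. This alone only confines $\mu$ to finitely many points. Hence the key is to use $|c_1|=2$, which arises from the $k=n-1$ term, $c_1a_{n-1}$. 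With $m=1$, $\mu=\delta_{\zeta_0}$ exactly. Consequently $p(z)=\frac{1+\bar\zeta_0 z}{1-\bar\zeta_0 z}$, and integrating $f'/f-1/z=(p-1)/z$ gives $f(z)=z/(1-\bar\zeta_0z)^2$, a rotation of the Koebe function.

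Conversely, the Koebe rotations satisfy $|a_n|=n$ for all $n$. This completes the proof of Theorem C.
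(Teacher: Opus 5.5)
Your proof is correct and follows the same route the paper takes. The paper only cites Theorem C, but it proves the first inequality of Theorem \ref{Th-22} the same way: set $zf'/f=p\in\mathcal{P}$, compare coefficients to get $(n-1)a_n=\sum_{k=1}^{n-1}c_{n-k}a_k$, and induct using $|c_n|\le 2$ (Lemma \ref{Lem1}). Your equality analysis, which the paper never carries out, is also sound: equality forces $|c_1|=2$, hence a point-mass Herglotz measure and $f(z)=z/(1-\bar\zeta_0 z)^2$. One wording fix: the sentence saying that $|c_m|=2$ forces a point mass is wrong for $m\ge 2$, where it only confines $\mu$ to $m$ points, as you note in the very next sentence.
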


\begin{theoD}{\em{\cite[Corollary, pp. 45]{Duren-1983}}} If $f\in\mathcal{K}(\mathbb{U})$, then $|a_n|\leq 1$ for $n=2,3,\ldots$. Strict inequality holds for all  $n$ unless $f$ is a rotation of the function $\mathcal{L}$ defined by $\mathcal{L}(z)=z(1-z)^{-1}$.
\end{theoD}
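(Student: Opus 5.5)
The plan is to reduce Theorem D to the classical coefficient estimate for functions of positive real part, after relating convex functions to starlike ones. Let $f\in\mathcal{K}(\mathbb{U})$ with $f(z)=z+\sum_{n\ge2}a_nz^n$. We set
\begin{align*}
p(z)=1+\frac{zf''(z)}{f'(z)}=1+\sum_{k\ge1}c_kz^k .
\end{align*}
Then $p$ is holomorphic in $\mathbb{U}$, satisfies $p(0)=1$, and has $\Re p>0$. By the Carath\'eodory lemma, $|c_k|\le 2$ for all $k$. For the equality discussion we need a sharper form: if $|c_k|=2$ for some $k$, then, via the Herglotz representation $p(z)=\int\frac{1+e^{-it}z}{1-e^{-it}z}\,d\mu(t)$, the measure $\mu$ is supported on points where $e^{-ikt}$ is constant.

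\textbf{Recursion.} Rewrite the definition of $p$ as $zf''(z)=(p(z)-1)f'(z)$ and compare coefficients. With $f'(z)=\sum_{n\ge1}na_nz^{n-1}$ and $a_1=1$, this gives
\begin{align*}
n(n-1)a_n=\sum_{k=1}^{n-1}c_{n-k}\,k\,a_k,\qquad n\ge2 .
\end{align*}
Now induct on $n$ with the hypothesis $|a_k|\le1$ for $k<n$. The recursion gives
\begin{align*}
n(n-1)|a_n|\le 2\sum_{k=1}^{n-1}k=n(n-1),
\end{align*}
so $|a_n|\le1$.

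Alternatively, one could pass through Alexander's theorem, under which $g=zf'$ is starlike and $na_n$ is the $n$-th coefficient of $g$. Theorem C then gives only $n|a_n|\le n$, which is the same bound but less transparent for the equality case. The direct recursion is therefore what I would use.

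\textbf{Equality case.} This is the main obstacle. Suppose $|a_n|=1$ for some $n\ge2$. Equality in the chain above forces $|a_k|=1$ for all $k<n$ and $|c_j|=2$ for $j=1,\dots,n-1$, with all terms in the sum sharing a common argument. In particular $|c_1|=2$. By the Herglotz representation, $|c_1|=2$ forces $\mu$ to be a point mass, so
\begin{align*}
p(z)=\frac{1+\varepsilon z}{1-\varepsilon z},\qquad |\varepsilon|=1 .
\end{align*}
Integrating, $f''/f'=2\varepsilon/(1-\varepsilon z)$, so $f'(z)=(1-\varepsilon z)^{-2}$ and $f(z)=z/(1-\varepsilon z)$. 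This is the rotation $\bar\varepsilon\,\mathcal{L}(\varepsilon z)$.

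Conversely, for $f=\mathcal{L}$ we have $a_n=1$ for all $n$. Hence strict inequality $|a_n|<1$ holds for every $n\ge2$ unless $f$ is a rotation of $\mathcal{L}$.

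The delicate point is justifying that equality at a single index $n$ propagates back down to $|c_1|=2$. This follows because every term in the bound is individually maximized only when each factor attains its maximum, and the $k=n-1$ term contains $c_1$ with the nonzero weight $(n-1)|a_{n-1}|$.
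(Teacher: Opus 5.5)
Your proof is correct. The recursion $n(n-1)a_n=\sum_{k=1}^{n-1}c_{n-k}\,k\,a_k$ and the induction both check out. So does the equality analysis: termwise equality forces $|c_1|=2$, hence a point-mass Herglotz measure, hence $f(z)=z(1-\varepsilon z)^{-1}$.

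The paper gives no proof of Theorem D; it quotes it from Duren, where it is a corollary of Theorem C via Alexander's theorem. Your direct recursion is instead the one-variable form of the paper's own argument for the first inequality of Theorem~\ref{Th-32}. That argument uses the identity $(k^2-k)a_k=\sum_{i=1}^{k-1}i\,p_{k-i}a_i$ together with Lemma~\ref{Lem1}. You add an equality analysis that the paper never carries out there.

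In fact the two routes are the same computation. For $g=zf'$ one has $zg'/g=1+zf''/f'=p$, and the starlike recursion $(n-1)b_n=\sum_{k=1}^{n-1}c_{n-k}b_k$ with $b_n=na_n$ is literally yours. So your remark that the Alexander route is ``less transparent for the equality case'' undersells it. The equality clause of Theorem C transfers at once, since $zf'(z)=z(1-\varepsilon z)^{-2}$ gives $f(z)=z(1-\varepsilon z)^{-1}$ in one line.

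What your version buys is self-containedness: it uses only Carath\'eodory's bound and the Herglotz equality case, rather than Theorem C and Alexander's theorem. The citation route buys brevity.
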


In addition to the inequality $\vert{}a_n\vert{} \leqslant 1$ for $n \geqslant 2$ mentioned in Theorem D, Theorem B and Alexander's theorem together yield the sufficient condition
$\sum_{n=2}^{\infty} n^2 \vert{}a_n\vert{} \leqslant 1$
 for a function $f$ to belong to $\mathcal{K}(\mathbb{U})$ (see also \cite[p. 60]{Thomas-Tuneski}).\vspace{1.2mm}

 A more general and elegant inequality concerning the coefficients of the function $f\in\mathcal{S}^*(\mathbb{U})$ can be stated as follows
\begin{theoE}{\em{\cite[Theorem 4.1.2]{Thomas-Tuneski}}} Let $f \in \mathcal{S}^*(\mathbb{U})$ and be given by \eqref{eq1}. Then
\begin{align*}
(n + 1)^2 \vert{}a_n\vert{}^2 \leqslant 4 \left( 1 + \sum_{k=2}^{n} k \vert{}a_k\vert{}^2 \right)
\end{align*}
for $n \geqslant 2$. Equality holds if and only if $f$ is a rotation of the Koebe function.
\end{theoE}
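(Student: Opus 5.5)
Write $p(z)=zf'(z)/f(z)=1+\sum_{n\ge1}c_nz^n$. Since $f\in\mathcal{S}^*(\mathbb{U})$, $p$ has positive real part in $\mathbb{U}$, and $zf'(z)=p(z)f(z)$. Comparing coefficients of $z^n$ in this identity (with $a_1=1$) gives
\begin{align*}
(n-1)a_n=\sum_{k=1}^{n-1}c_{n-k}a_k ,\qquad n\ge 2 .
\end{align*}
The plan is to turn this into a quadratic relation using a Clunie-type (area-type) argument rather than the pointwise bound $|c_k|\le 2$.

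Set $w(z)=(p(z)-1)/(p(z)+1)$. This is a Schwarz function: $w(0)=0$ and $|w(z)|<1$. Then $p=(1+w)/(1-w)$, and $zf'=pf$ becomes
\begin{align*}
zf'(z)-f(z)=w(z)\bigl(zf'(z)+f(z)\bigr).
\end{align*}
In coefficient form, the left side is $\sum_{k\ge2}(k-1)a_kz^k$ and $zf'+f=\sum_{k\ge1}(k+1)a_kz^k$. Fix $n\ge2$. Because $w(0)=0$, the coefficients of $z^k$ for $k\le n$ on the right depend only on $a_1,\dots,a_{n-1}$. Hence
\begin{align*}
\sum_{k=2}^{n}(k-1)a_kz^k+\sum_{k=n+1}^{\infty}d_kz^k=w(z)\sum_{k=1}^{n-1}(k+1)a_kz^k
\end{align*}
for some coefficients $d_k$.

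Next I apply Parseval on $|z|=r$ and let $r\to1^-$. Since $|w|<1$, the right side has mean square at most $\sum_{k=1}^{n-1}(k+1)^2|a_k|^2r^{2k}$, and the left side has mean square at least $\sum_{k=2}^{n}(k-1)^2|a_k|^2r^{2k}$. Letting $r\to1$ yields
\begin{align*}
\sum_{k=2}^{n}(k-1)^2|a_k|^2\le\sum_{k=1}^{n-1}(k+1)^2|a_k|^2 .
\end{align*}
Moving terms across and using $(k+1)^2-(k-1)^2=4k$ gives
\begin{align*}
(n-1)^2|a_n|^2\le 4\sum_{k=1}^{n-1}k|a_k|^2=4\Bigl(1+\sum_{k=2}^{n-1}k|a_k|^2\Bigr).
\end{align*}
Adding $4n|a_n|^2$ to both sides turns the left side into $(n-1)^2+4n=(n+1)^2$ times $|a_n|^2$ and extends the sum to $k=n$. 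This is exactly the inequality of Theorem E.

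For the equality case, equality forces $|w(e^{i\theta})|=1$ almost everywhere wherever the weighted polynomial is nonzero, and it forces $d_k=0$ for all $k\ge n+1$. I expect this to be the delicate step: showing that these conditions make $w$ a unimodular constant times $z$. My approach is to note that the identity $\bigl(\sum_{k\le n}(k-1)a_kz^k\bigr)=w\cdot Q(z)$ with polynomial $Q$ makes $w$ rational, and $|w|=1$ on the circle makes it a finite Blaschke product. Then $p=(1+w)/(1-w)$ and $zf'/f=p$ show that $f$ is a rotation of the Koebe function, obtained from $w(z)=\lambda z$. Conversely, for $k(z)=z/(1-z)^2$ we have $a_k=k$, and the identity $\sum_{k\le n}k^3=(n(n+1)/2)^2$ shows that $(n+1)^2n^2=4\sum_{k\le n}k^3$, so equality holds.
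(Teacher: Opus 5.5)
Your proof of the inequality is correct. It is essentially the argument the paper uses: the paper never proves Theorem E itself, but its proof of Theorem~\ref{Th-22} (which reduces to Theorem E when $X=\mathbb{C}$) follows the same steps as yours. It writes $g=(1+\omega)/(1-\omega)$, truncates the identity so that one side is a degree-$n$ polynomial plus a tail, applies Parseval on $|\zeta|=r$, lets $r\to1$, and adds $4n|a_n|^2$. The ``if'' half of the equality statement, that Koebe rotations attain equality, is also fine.

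The gap is in the ``only if'' half. Write $P(z)=\sum_{k=2}^{n}(k-1)a_kz^k$ and $Q(z)=\sum_{k=1}^{n-1}(k+1)a_kz^k$. You correctly get from equality that $d_k=0$ for $k\ge n+1$, so $P=wQ$, and that $w$ is then a finite Blaschke product with $w(0)=0$. But you then assert, with no argument, that $w(z)=\lambda z$. That step cannot be supplied, because the claim is false for every $n\ge 3$.

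Take $m=n-1\ge 2$ and the $m$-fold symmetric Koebe function $f(z)=z(1-z^{m})^{-2/m}$. Then $zf'(z)/f(z)=(1+z^m)/(1-z^m)$, so $f\in\mathcal{S}^*(\mathbb{U})$ with $w(z)=z^m$. Its coefficients are $a_2=\dots=a_{n-1}=0$ and $a_n=2/(n-1)$. Hence
\begin{align*}
(n+1)^2|a_n|^2=\frac{4(n+1)^2}{(n-1)^2},\qquad 4\Bigl(1+n|a_n|^2\Bigr)=\frac{4\bigl((n-1)^2+4n\bigr)}{(n-1)^2}=\frac{4(n+1)^2}{(n-1)^2}.
\end{align*}
So equality holds, yet $f$ is not a rotation of the Koebe function. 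In your notation $Q(z)=2z$ and $P(z)=2z^n=wQ$. The simplest case is $n=3$, $f(z)=z/(1-z^2)$, where both sides equal $16=4(1+3)$.

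The uniqueness claim is therefore true only for $n=2$, where the inequality reduces to $|a_2|\le 2$. For $n\ge3$ your argument proves exactly two things: equality holds precisely when $P=wQ$ for a finite Blaschke product $w$ with $w(0)=0$, and Koebe rotations are among these extremals. That is also all the paper asserts in its own analogue, Theorem~\ref{Th-22}, where only attainment by the Koebe-type mapping is claimed.
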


We now present an analogue of Theorem E for the class $f\in\mathcal{K}(\mathbb{U})$.
\begin{theoF}{\em{\cite[Theorem 4.2.4]{Thomas-Tuneski}}}
Let $f \in \mathcal{K}(\mathbb{U})$ be given by \eqref{eq1}. Then for $n \geqslant 2$,
\begin{align*}
n^2(n + 1)^2 \vert{}a_n\vert{}^2 \leqslant 4 \left( 1 + \sum_{k=2}^{n} k^3 \vert{}a_k\vert{}^2 \right).
\end{align*}
Equality holds for the convex function $f(z)=z(1 - z)^{-1}$.
\end{theoF}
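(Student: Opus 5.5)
The natural route is to reduce Theorem F to Theorem E through Alexander's theorem. For $f \in \mathcal{K}(\mathbb{U})$ given by \eqref{eq1}, define $g(z) = z f'(z)$. Then $g$ is analytic in $\mathbb{U}$ with $g(0)=0$ and $g'(0)=1$, and
\begin{align*}
\frac{z g'(z)}{g(z)} = 1 + \frac{z f''(z)}{f'(z)}.
\end{align*}
Hence the convexity condition $\Re\{1 + z f''(z)/f'(z)\} > 0$ is exactly the condition $\Re\{z g'(z)/g(z)\} > 0$. A function with this property is univalent, so $g \in \mathcal{S}^*(\mathbb{U})$. Its power series is
\begin{align*}
g(z) = z + \sum_{n=2}^{\infty} n a_n z^n ,
\end{align*}
so the coefficients of $g$ are $b_n = n a_n$.

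Applying Theorem E to $g$ gives, for $n \geq 2$,
\begin{align*}
(n+1)^2 |b_n|^2 \leq 4\left(1 + \sum_{k=2}^{n} k |b_k|^2\right).
\end{align*}
Substituting $|b_n|^2 = n^2 |a_n|^2$ and $k|b_k|^2 = k^3 |a_k|^2$ yields
\begin{align*}
n^2 (n+1)^2 |a_n|^2 \leq 4\left(1 + \sum_{k=2}^{n} k^3 |a_k|^2\right),
\end{align*}
which is the asserted inequality. The only step needing any justification is that $g$ lies in $\mathcal{S}^*(\mathbb{U})$; this is the standard Alexander argument above. Beyond that, the proof is pure bookkeeping.

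For the equality statement, take $f(z) = z(1-z)^{-1}$. Then $a_k = 1$ for all $k$, and $g(z) = z f'(z) = z(1-z)^{-2}$ is the Koebe function, so equality holds in Theorem E. Directly, the left side is $n^2(n+1)^2$. The right side is
\begin{align*}
4\left(1 + \sum_{k=2}^{n} k^3\right) = 4\sum_{k=1}^{n} k^3 = n^2 (n+1)^2,
\end{align*}
so the two sides agree.

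If one did not want to rely on Theorem E, the fallback is to reprove it for $g$. Write $z g'(z) = g(z) p(z)$ with $\Re p > 0$ and $p(0) = 1$. Truncate the identity $z g'(z) - g(z) = g(z)(p(z) - 1)$ at order $n$, and compare $L^2$ norms on $|z| = r$ using Parseval together with Clunie's method, via subordination $p - 1 \prec 2z/(1-z)$ in the form $(p-1) = \omega(1+p)$ with $|\omega| < 1$. Then let $r \to 1$. That Clunie step is the only genuinely nontrivial part.
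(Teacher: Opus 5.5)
Your proof is correct. Alexander's theorem puts $g=zf'$ in $\mathcal{S}^*(\mathbb{U})$ with coefficients $b_k=ka_k$, Theorem E for $g$ is exactly the claimed inequality, and equality for $z/(1-z)$ comes from its Alexander transform being the Koebe function. The paper does not prove Theorem F; it quotes it from Thomas--Tuneski. Its proof of the Banach-space analogue, Theorem~\ref{Th-32}, reduces to Theorem F when $X=\mathbb{C}$, and it works directly on the convexity functional rather than reducing to the starlike case. It writes $1+\zeta f''(\zeta)/f'(\zeta)=(1+\omega(\zeta))/(1-\omega(\zeta))$ with a Schwarz function $\omega$, obtaining
\[
\Bigl(2+\sum_{k\geq 2}k(k+1)a_k\zeta^{k-1}\Bigr)\omega(\zeta)=\sum_{k\geq 2}k(k-1)a_k\zeta^{k-1}.
\]
It then truncates, applies Parseval on $|\zeta|=r$, and lets $r\to 1$ to get $n^2(n-1)^2|a_n|^2\leq 4\bigl(1+\sum_{k=2}^{n-1}k^3|a_k|^2\bigr)$. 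Adding $4n^3|a_n|^2$ to both sides finishes the proof. After multiplication by $\zeta$, this identity is precisely Clunie's relation $\zeta g'(\zeta)-g(\zeta)=\omega(\zeta)\bigl(\zeta g'(\zeta)+g(\zeta)\bigr)$ for your $g(\zeta)=\zeta f'(\zeta)$. So your fallback is literally the paper's computation, and the two proofs differ only in packaging. Your reduction is shorter and explains the extremal function conceptually. The direct version needs neither Alexander's theorem nor the univalence of $zf'$, only positivity of the real part of the scalar function defined in \eqref{Th-31.0} on each slice $\zeta\mapsto\zeta z_0$. That is what lets the paper run the argument for $\mathcal{K}_B(\mathbb{B})$ without first setting up an Alexander-type correspondence between $\mathcal{K}_B(\mathbb{B})$ and $\mathcal{S}^*(\mathbb{B})$.
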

\vspace{1.2mm}

Another coefficient problem which has attracted considerable attention is to estimate
\begin{align*}
d_n=|\;|a_{n+1}|-|a_n|\;|,\quad n=2,3,\ldots,
\end{align*}
the difference of the moduli of successive coefficients of a function $f\in\mathcal{S}(\mathbb{U})$.\vspace{1.2mm}

Finding the best upper bound for the difference of coefficients $d_n$ when
$f \in \mathcal{S}(\mathbb{U})$ remains open. Hayman \cite{Hayman-1963} showed that $d_n\le A$ for $n \ge 2$, where $A$ is an absolute constant. Milin \cite{Milin-1968}, using a different method, showed that $A < 9$; Ilina \cite{Ilina-1968} improved this to $A < 4.26\dots$, and Grinšpan \cite{Grinspan} to $A < 3.61\dots$.

On the other hand, for $n = 2$, Goluzin \cite{Goluzin-1946,Goluzin-1949}, Jenkins \cite{Jenkins-1960}, and Duren \cite[p. 114]{Duren-1983} proved reminiscent of the sharp bound in the Fekete-–Szeg\"{o} theorem that for $f \in \mathcal{S}(\mathbb{U})$, the following sharp inequality holds:
\begin{align*}
-1 \le |a_3| - |a_2| \le \frac{3}{4} + e^{-\lambda_0}\left(2e^{-\lambda_0} - 1\right) = 1.029\dots,
\end{align*}
where $\lambda_0$ is the unique value in $(0, 1)$ satisfying $4\lambda = e^\lambda$.

Thus, for general functions $f \in \mathcal{S}$, the constant $A$ cannot be reduced to $1$. However, for starlike functions, Leung \cite{Leung-1978} used the third Lebedev--Milin inequality to prove that the best possible bound is $A = 1$. We present his result in the following theorem (see also \cite[Thorem 4.1.4]{Thomas-Tuneski}).

\begin{theoG} Let $f \in \mathcal{S}^*(\mathbb{U})$ and be given by \eqref{eq1}. Then, $\vert{}\vert{}a_{n+1}\vert{} - \vert{}a_n\vert{}\vert{} \le 1$  for $n \ge 2$. For a fixed $n$, equality holds if and only if $f(z) = f_0(z)$, where
\begin{align*}
f_0(z) = \frac{z}{(1 - \gamma z)(1 - \zeta z)}
\end{align*}
for some $\gamma$ and $\zeta$ satisfying $\vert{}\gamma\vert{} = \vert{}\zeta\vert{} = 1$.
\end{theoG}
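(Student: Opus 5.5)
The plan is to bound a single linear combination $|a_{n+1}-\zeta a_n|$, for one well-chosen unimodular $\zeta$, and to get that bound from the third Lebedev--Milin (exponential) inequality. This controls both signs at once, since for every $|\zeta|=1$ the reverse triangle inequality gives
\begin{align*}
\bigl|\,|a_{n+1}|-|a_n|\,\bigr| \le |a_{n+1}-\zeta a_n| .
\end{align*}
So it suffices to find one $\zeta=e^{i\theta}$ with $|a_{n+1}-\zeta a_n|\le 1$.

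\textbf{Step 1 (setup).} Since $f\in\mathcal{S}^*(\mathbb{U})$, the function $p(z)=zf'(z)/f(z)=1+\sum_{k\ge1}p_kz^k$ has positive real part. By the Herglotz representation,
\begin{align*}
p_k=2\int_0^{2\pi}e^{ikt}\,d\mu(t)
\end{align*}
for some probability measure $\mu$. Integrating $p(z)-1$ against $dz/z$ gives $\log\bigl(f(z)/z\bigr)=\sum_{k\ge1}(p_k/k)z^k$. Put
\begin{align*}
h(z)=(1-\zeta z)\frac{f(z)}{z}=\sum_{m\ge0}(a_{m+1}-\zeta a_m)z^m, \qquad a_0=0,\ a_1=1 .
\end{align*}
Then $\log h(z)=\sum_{k\ge1}\alpha_kz^k$ with $\alpha_k=(p_k-\zeta^k)/k$. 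Since $h(0)=1$, the third Lebedev--Milin inequality gives
\begin{align*}
|a_{n+1}-\zeta a_n|^2\le \exp\Bigl(\sum_{k=1}^{n}\Bigl(k|\alpha_k|^2-\frac1k\Bigr)\Bigr).
\end{align*}
Expanding $k|\alpha_k|^2=\bigl(|p_k|^2-2\,\Re(p_k\bar\zeta^k)+1\bigr)/k$, it is enough to choose $\theta$ so that
\begin{align*}
\sum_{k=1}^{n}\frac{|p_k|^2-2\,\Re(p_k\bar\zeta^k)}{k}\le 0 .
\end{align*}

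\textbf{Step 2 (choice of $\zeta$; the main point).} Let $K(x)=\sum_{k=1}^n\cos(kx)/k$ and $G(\theta)=\int K(s-\theta)\,d\mu(s)$. Substituting the Herglotz formula, the sum above equals $4\int G\,d\mu-4G(\theta)$, because
\begin{align*}
\sum_{k=1}^n\frac{|p_k|^2}{k}=4\iint K(s-t)\,d\mu(s)\,d\mu(t)=4\int G\,d\mu .
\end{align*}
Now choose $\theta$ to maximize the continuous function $G$ on $[0,2\pi]$. Since $\mu$ is a probability measure, $\int G\,d\mu\le\max G=G(\theta)$, so the sum is $\le0$. This gives $|a_{n+1}-\zeta a_n|\le1$, and hence the inequality of the statement. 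I expect this step to be the crux: the naive attempt of bounding $|p_k|\le 2$ term by term fails, and the averaging/maximization trick is what makes the quadratic expression nonpositive.

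\textbf{Step 3 (equality).} If equality holds, then equality must hold both in the Lebedev--Milin inequality and in $\int G\,d\mu\le\max G$. The first forces $k\alpha_k=\bar\beta^{\,k}$-type structure, i.e. $p_k-\zeta^k=\gamma^k$ for some unimodular $\gamma$. The second forces $\mu$ to be supported on the set where $G$ attains its maximum. Together these give $p_k=\zeta^k+\gamma^k$, that is, $f(z)=z/\bigl((1-\gamma z)(1-\zeta z)\bigr)$. Conversely, a direct computation of the coefficients of $f_0$ shows that equality is attained for suitable $\gamma,\zeta$. Tracking the Lebedev--Milin equality case precisely is routine but is the fiddly part of this step.
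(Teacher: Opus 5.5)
Your proof of the inequality is correct, and it is Leung's argument. The paper does not prove Theorem G itself, but its proof of Theorem~\ref{Th-23} runs the same scheme: apply the third Lebedev--Milin inequality to $\log\{(1-\xi z)f(z)/z\}$, then pick a unimodular $\xi$ that makes the exponent nonpositive. The paper only cites Duren for the existence of such a $\xi$. Your Herglotz-averaging Step 2 proves exactly that fact, and the identities $\sum_{k\le n}|p_k|^2/k=4\int G\,d\mu$ and $\sum_{k\le n}2\Re(p_k\bar\zeta^k)/k=4G(\theta)$ are right.

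The gap is in Step 3. Equality in the Lebedev--Milin inequality gives $p_k=\zeta^k+\gamma^k$ only for $1\le k\le n$. Knowing that $\mu$ is carried by the maximum set of $G$ does not extend this to all $k$.

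Your argument never uses $n\ge 2$, and for $n=1$ its conclusion is false. Take $f(z)=z(1-z^3)^{-2/3}$, which is starlike with $a_2=0$, so $\bigl||a_2|-|a_1|\bigr|=1$. It has $a_3=0$, whereas every $f_0$ with $a_2=\gamma+\zeta=0$ has $|a_3|=1$. Here $p_1=0$ and $G\equiv 0$, so your reasoning yields only $p_1=\zeta+\gamma$.

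What is missing is that two moments already determine $\mu$. Write $w=e^{it}$ and $p_k=2\int w^k\,d\mu$. The relations $p_1=\zeta+\gamma$ and $p_2=\zeta^2+\gamma^2$ are available because $n\ge 2$, and expanding the square gives
\[
\int_0^{2\pi}\bigl|(w-\zeta)(w-\gamma)\bigr|^2\,d\mu(t)=0 .
\]
So $\mu$ is carried by the points where $w\in\{\zeta,\gamma\}$, and $p_1=\zeta+\gamma$ forces mass $1/2$ at each (a point mass if $\zeta=\gamma$). Hence $p(z)=\frac12\bigl(\frac{1+\zeta z}{1-\zeta z}+\frac{1+\gamma z}{1-\gamma z}\bigr)$, so $p_k=\zeta^k+\gamma^k$ for every $k$ and $f=f_0$.

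You are right to read the converse as ``for suitable $\gamma,\zeta$''. The choice $\gamma=\zeta$ works for every $n$. For $\gamma=\zeta e^{i\phi}$, however, $|a_m|=|\sin(m\phi/2)/\sin(\phi/2)|$, and equality at a fixed $n$ requires $e^{in\phi}=1$ or $e^{i(n+1)\phi}=1$.
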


For $f \in \mathcal{S}^*(\mathbb{U})$, Theorem G establishes that $A = 1$ for $n \ge 2$. A natural question that arises is whether $A=1$ also holds when $f \in \mathcal{K}(\mathbb{U})$. The following result is due to
Koepf \cite{Koepf-1987} (see also \cite[Thorem 9.3.7]{Thomas-Tuneski}).
\begin{theoH} Let $f\in\mathcal{K}(\mathbb{U})$ and be given by \eqref{eq1}. Then $||a_3|-|a_2||\leq 1$.
The inequality is sharp for the Koebe function.
\end{theoH}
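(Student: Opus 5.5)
The plan is to reduce Theorem H to the coefficient bound of Theorem D. For $f\in\mathcal{K}(\mathbb{U})$ with expansion \eqref{eq1}, Theorem D gives $|a_2|\le 1$ and $|a_3|\le 1$. Since both moduli are nonnegative,
\begin{align*}
|a_3|-|a_2|\le |a_3|\le 1 \quad\text{and}\quad |a_2|-|a_3|\le |a_2|\le 1 ,
\end{align*}
so $\bigl||a_3|-|a_2|\bigr|\le 1$ follows directly. This step uses nothing beyond Theorem D.

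To see how far from equality one actually is, I would write $1+zf''(z)/f'(z)=p(z)=1+c_1z+c_2z^2+\cdots$ with $\Re p>0$. Comparing coefficients in $zf''=(p-1)f'$ gives
\begin{align*}
a_2=\frac{c_1}{2}, \qquad a_3=\frac{c_2+c_1^2}{6}.
\end{align*}
The Carath\'eodory bounds $|c_1|\le 2$ and $|c_2|\le 2$ then recover Theorem D for $n=2,3$. They also show the two extreme configurations cannot occur.
\begin{itemize}
\item[(i)] $|a_3|=1$ with $a_2=0$ is impossible: $c_1=0$ forces $|a_3|=|c_2|/6\le 1/3$.
\item[(ii)] $|a_2|=1$ with $a_3=0$ is impossible: $|c_1|=2$ forces $p(z)=(1+\varepsilon z)/(1-\varepsilon z)$ with $|\varepsilon|=1$, hence $c_2=2\varepsilon^2$ and $|a_3|=1$.
\end{itemize}

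The main point needing care is therefore the sharpness claim, not the inequality. The computation above shows that the value $1$ is not attained by any convex function. The Koebe function, named in the statement, is not convex. So the constant $1$ should be understood as the crude bound coming from Theorem D, with attention to extremal behaviour only as a limiting or heuristic statement.

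If a genuinely sharp constant were wanted, I would maximize $|c_2+c_1^2|/6-|c_1|/2$ and $|c_1|/2-|c_2+c_1^2|/6$ over the coefficient region of the Carath\'eodory class. For this I would use the parametrization $c_2=\tfrac12 c_1^2+\tfrac12(4-|c_1|^2)\zeta$ with $|\zeta|\le 1$. That reduces each problem to a one-variable calculus problem in $|c_1|\in[0,2]$, which I expect to be the only nontrivial step.
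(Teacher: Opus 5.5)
Your proof of the inequality is correct and is the same reduction the paper uses. The paper only cites Theorem H, but in its Banach-space analogue (Theorem 3.3, case $k=2$) it writes $a_2=p_1/2$, $a_3=(p_2+p_1^2)/6$ and bounds $\bigl||a_3|-|a_2|\bigr|\le\max\{|a_2|,|a_3|\}\le 1$ by Lemma 1; that is your argument with Theorem D made explicit.

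Your objection to the sharpness clause is also correct. The Koebe function is not convex, and your cases (i)--(ii) show that equality is impossible in $\mathcal{K}(\mathbb{U})$. The bound with the Koebe function as extremal is presumably Koepf's result for the larger close-to-convex class, carried over to convex functions through a clash of notation.

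The one thing to retract is your hedge that $1$ might be extremal ``as a limiting statement'': it is not even the supremum. Completing your last step with $c_1=c\in[0,2]$ gives
\[
|a_3|-|a_2|\le\frac{c^2+2}{6}-\frac{c}{2}=\frac{(c-1)(c-2)}{6}\le\frac13,\qquad |a_2|-|a_3|\le\frac{c}{2}-\frac13\max\{0,c^2-1\}\le\frac12 .
\]
Equality holds for $f(z)=\frac12\log\frac{1+z}{1-z}$ in the first bound, and for $f'(z)=(1-z+z^2)^{-1}$ in the second (there $a_2=\frac12$, $a_3=0$). So the sharp constant in $\bigl||a_3|-|a_2|\bigr|\le C$ for convex functions is $C=\frac12$.
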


Before going further let us define some terms which will recur.
Let $X$ a complex Banach space with norm $||.||$. The ball of radius $r$, $\mathbb{B}_r =\{z\in X:||z||<r\}$. If $r=1$, we will simply use $\mathbb{B}$ and if $X=\mathbb{C}$, then $\mathbb{B}=\mathbb{U}$. Let $\partial \mathbb{B}$ denote the boundary of $\mathbb{B}$ and $\overline{\mathbb{B}}$ its closure.\vspace{1.2mm}

Now let $X$ and $Y$ be two complex Banach spaces with respect to the norms $||.||_1$ and $||.||_2$ respectively. For simplicity, we denote both norms by $||.||$, when there is no possibility of confusion. Let $L(X, Y)$ denote the set of continuous linear operators from a complex Banach space $X$ into a complex Banach space $Y$ with the standard operator norm 
\begin{align*} 
||A||=\sup\{||A(z)||: ||z||=1,\quad A\in L(X,Y)\}.\end{align*} Let $I$ be the identity operator in $L(X, X)$.  For each $z \in X \setminus \{0\}$, define the set
\begin{align*}
\displaystyle
T(z) = \{ l_z \in L(X, \mathbb{C}) : l_z(z) = \Vert{}z\Vert{}, \quad \Vert{}l_z\Vert{} = 1 \}.
\end{align*}
This set is guaranteed to be nonempty by the Hahn-Banach theorem. Let $\Omega$ be a domain in $X$ and let $f:\Omega \rightarrow Y$ be a mapping. Then $f$ is called holomorphic on $\Omega$ if for any $z\in\Omega$ there is a mapping $Df(z)\in L(X,Y)$, called the Fr\'echet derivative of $f$ at $z$ such that \begin{align*} \lim_{h\to 0} \frac{||f(z+h)-f(z)-Df(z)h||}{||h||}=0,
\end{align*} 
i.e., 
\begin{align*} 
f(z+h)=f(z)+Df(z)h+o(||h||).
\end{align*}
Let $\mathcal{H}(\Omega, \Omega')$ represent the set of all holomorphic mappings from a domain $\Omega \subset X$ into another domain $\Omega' \subset Y$. We simply write $\mathcal{H}(\Omega, X) = \mathcal{H}(\Omega)$. It is well known that if $f \in \mathcal{H}(\Omega,Y)$, then for all $w$ in some neighborhood of $z \in \Omega$, the function can be expressed as the power series expansion:
\begin{align*}
\displaystyle
f(w) = \sum_{n=0}^{\infty} \frac{1}{n!} D^n f(z)((w - z)^n),
\end{align*}
where $D^n f(z)$ is a bounded symmetric $n$-linear mapping from $\prod_{j=1}^n X$ into $Y$, called the $n$-th Fr\'echet derivative of $f$ at $z$. Moreover, the term $D^n f(z)((w - z)^n)$ is shorthand for applying the $n$-linear mapping to $n$ copies of the vector $(w - z)$:
\begin{align*}
\displaystyle
D^n f(z)((w - z)^n) = D^n f(z)(\underbrace{w - z, w - z, \dots, w - z}_{n \text{ times}}),
\end{align*}
where $D^0f(z)=f(z)$.\vspace{1.2mm}

Let $\mathbb{B}$ be the open unit ball in a complex Banach space $X$. A mapping $f \in \mathcal{H}(\mathbb{B})$ is said to be normalized if $f(0) = 0$ and $Df(0) = I$, where $I \in L(X,X)$ is the identity operator. It is called biholomorphic if $f$ is bijective with a holomorphic inverse $f^{-1}$ on $f(\mathbb{B})$, and locally biholomorphic if $Df(z)$ has a bounded inverse for every $z \in \mathbb{B}$. The family of all normalized biholomorphic mappings on $\mathbb{B}$ is denoted by $\mathcal{S}(\mathbb{B})$. \vspace{1.2mm}

In several complex variables, classical results from geometric function theory often fail to hold in their original form. Most notably, Poincar\'e disproved the higher-dimensional analogue of the Riemann mapping theorem, while Cartan constructed counterexamples to the Bieberbach conjecture on the unit ball.

As a result, extending univalent function theory to higher dimensions requires introducing appropriate subclasses or additional structural restrictions. Recently, coefficient bounds for various subclasses of normalized biholomorphic mappings in higher dimensions have been established by Bracci et al. \cite{Bracci-2016},
Bracci and Roth \cite{Bracci-2018}, Graham et al. \cite{Graham-2002}, \cite{Graham-2016}, Hamada and Honda \cite{Hamada-Honda-2008} and Hamada and Kohr \cite{Hamada-Kohr-2020}.\vspace{2mm}

In this paper, we mainly consider the biholomorphic mappings $F:\mathbb{B}\to X$ of the form $F(z)=f(z)z$ for $z\in\mathbb{B}$, where $f:\mathbb{B}\to \mathbb{C}$ is a holomorphic function satisfying $f(0)=1$.
The contributions of this paper are threefold. First, we establish a sufficient coefficient condition for a function $f$ to be starlike as well as quasi-convex of type $B$ in a complex Banach space. Second, we prove that the Bieberbach conjecture holds for such functions. Finally, we investigate the difference of coefficients for starlike and quasi-convex functions of type $B$ in this setting. These results extend the above Theorems A–-H to higher dimensions.

\section{{\bf Coefficient theorems of starlike mappings in complex Banach space}}\label{Sec-2}

A biholomorphic mapping $f$ on $\mathbb{B}$ is said to be starlike if $f(\mathbb{B})$ is starlike with
respect to the origin. If $\mathbb{B}$ is the unit ball of $X$, a complex Banach space, we let $\mathcal{S}^*(\mathbb{B})$ denote the subset of $\mathcal{S}(\mathbb{B})$ consisting of normalized starlike mappings from $\mathbb{B}$ into $X$.

The following proposition is the well-known criterion for starlikeness on the unit
ball in a complex Banach space.
\begin{ProA}{\em{\cite{Su2}}} Let $f:\mathbb{B}\to  X$ be a locally biholomorphic mapping. Then $f$ is a starlike mapping on $\mathbb{B}$ if and only if
\begin{align}
\Re\left\lbrace l_z\left([Df(z)]^{-1}f(z)\right)\right\rbrace>0,\quad z\in\mathbb{B}\backslash\{0\},\quad l_z\in T(z).
\end{align}
\end{ProA}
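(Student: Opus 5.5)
The plan is to prove both directions through the one-parameter family $v(z,t)$ defined by $f(v(z,t))=e^{-t}f(z)$, and to set $w(z)=[Df(z)]^{-1}f(z)$. Throughout I assume $f$ is normalized ($f(0)=0$, $Df(0)=I$), as in the setting of $\mathcal{S}^*(\mathbb{B})$. Then $w(0)=0$, $Dw(0)=I$, and $w$ is holomorphic on $\mathbb{B}$ because $Df(z)$ is boundedly invertible there. A basic tool is the following one-variable reduction. For $u\in\partial\mathbb{B}$ and $l_u\in T(u)$, note that $l_u\in T(\zeta u)$ up to the unimodular factor $\bar\zeta/|\zeta|$. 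Hence the function
\[
p(\zeta)=\frac{l_u\bigl(w(\zeta u)\bigr)}{\zeta},\qquad p(0)=1,
\]
is holomorphic on $\mathbb{U}$, and $\Re p(\zeta)$ has the same sign as $\Re\, l_{\zeta u}(w(\zeta u))$.

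\emph{Necessity.} Suppose $f$ is starlike. Then $e^{-t}f(\mathbb{B})\subset f(\mathbb{B})$ for $t\ge 0$, so $v(z,t)=f^{-1}(e^{-t}f(z))$ is a holomorphic self-map of $\mathbb{B}$ with $v(0,t)=0$. The Schwarz lemma on the unit ball of a Banach space gives $\|v(z,t)\|\le\|z\|$. Therefore
\[
\Re\, l_z\bigl(z-v(z,t)\bigr)\ \ge\ \|z\|-\|v(z,t)\|\ \ge 0 .
\]
Differentiating $f(v)=e^{-t}f(z)$ at $t=0$ gives $\partial_t v|_{t=0}=-w(z)$. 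Dividing by $t$ and letting $t\to 0^+$ yields $\Re\, l_z(w(z))\ge 0$. To upgrade this to strict positivity, apply the minimum principle for harmonic functions to $\Re p$. Since $\Re p\ge 0$ and $p(0)=1$, $\Re p$ is not identically zero, so it cannot attain the value $0$ at an interior point. Hence $\Re p>0$, which gives the strict inequality.

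\emph{Sufficiency.} Assume $\Re\, l_z(w(z))>0$. Then $\Re p>0$ with $p(0)=1$, so the Carathéodory/Harnack bound $\Re p(\zeta)\ge \frac{1-|\zeta|}{1+|\zeta|}$ gives
\[
\Re\, l_z(w(z))\ \ge\ \|z\|\,\frac{1-\|z\|}{1+\|z\|}.
\]
Now solve $\partial_t v=-w(v)$ with $v(z,0)=z$. Since $w$ is holomorphic, it is locally Lipschitz, so local solutions exist and are unique. The estimate above shows that $t\mapsto\|v(z,t)\|$ is decreasing, with upper Dini derivative at most $-\|v\|\frac{1-\|v\|}{1+\|v\|}$. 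Hence the trajectory stays in $\overline{\mathbb{B}}_{\|z\|}$, exists for all $t\ge0$, and $v(z,t)\to 0$ as $t\to\infty$. Moreover
\[
\frac{d}{dt}f(v)=Df(v)\bigl(-w(v)\bigr)=-f(v),
\]
so $f(v(z,t))=e^{-t}f(z)$.

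Injectivity then follows. Since $Df(0)=I$, $f$ is injective on some small ball $\mathbb{B}_\rho$. If $f(z_1)=f(z_2)$, then for large $t$ both $v(z_j,t)$ lie in $\mathbb{B}_\rho$ and have the same image $e^{-t}f(z_1)$, so $v(z_1,t)=v(z_2,t)$. Backward uniqueness for the ODE then forces $z_1=z_2$. The inverse is holomorphic by the inverse function theorem, because $Df$ is invertible everywhere. Finally, for $0<s\le1$ choose $t=-\log s$; then $sf(z)=f(v(z,t))\in f(\mathbb{B})$, so $f(\mathbb{B})$ is starlike with respect to the origin.

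\emph{Main obstacle.} I expect the hardest step to be global existence of the flow in an infinite-dimensional $X$. A holomorphic $w$ need not be bounded on $\mathbb{B}_r$ for $r<1$, so the usual argument that "the solution stays in a compact set" is unavailable. I would handle this by combining local Lipschitz continuity with the norm-decrease estimate. Suppose the solution existed only on a maximal interval $[0,T)$ with $T<\infty$. The trajectory stays in $\overline{\mathbb{B}}_{\|z\|}$, and it is also a solution of the flow of the normalized vector field. I would show that $v(t)$ is Cauchy as $t\to T$: first on the initial small ball, where $w$ is bounded, and then propagate this along the trajectory using continuity of $w$ on the compact trajectory segment. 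This contradicts maximality, so the flow exists for all $t\ge 0$.
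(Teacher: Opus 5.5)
The paper gives no proof of Proposition A; it only quotes it from Suffridge. Your overall strategy is the standard one: the Schwarz lemma for necessity, and the flow $\partial_t v=-w(v)$ with $f(v(z,t))=e^{-t}f(z)$ for sufficiency. Most of your argument is sound. This includes the necessity proof with the minimum-principle upgrade, the Harnack bound, and the Dini-derivative estimate for $\|v(z,t)\|$, which correctly uses that the hypothesis holds for \emph{every} $l_z\in T(z)$. The identity $f(v)=e^{-t}f(z)$, injectivity via backward uniqueness, and the final starlikeness step are also fine. You should state that $f(0)=0$ is part of the hypothesis, since necessity fails without it. Once $f(0)=0$ is assumed, $Df(0)=I$ is harmless: replacing $f$ by $Df(0)^{-1}f$ changes neither starlikeness nor $[Df]^{-1}f$.

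The genuine gap is the step you flagged, global existence of the flow, and your proposed fix is circular. Invoking continuity of $w$ ``on the compact trajectory segment'' presupposes that $\{v(t):0\le t<T\}$ has compact closure. That means $v(t)$ has a cluster point as $t\to T^-$, which is exactly what must be proved. In infinite dimensions a path confined to $\overline{\mathbb{B}}_{\|z\|}$ need not have cluster points. Your estimates control only the radial part $\Re\, l_v(w(v))$ of the velocity. Nothing so far prevents $\|w(v(t))\|$ from blowing up with $\int_0^T\|w(v(t))\|\,dt=\infty$. The small ball around $0$ on which $w$ is bounded does not help either, because the trajectory is known to reach it only if it already exists long enough.

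The missing ingredient is that $w$ \emph{is} bounded on each $\overline{\mathbb{B}}_r$, $r<1$, for this class, even though this fails for general holomorphic maps. One way to see it is as follows.
Write $w(x)=x+\sum_{m\ge2}P_m(x)$ with $P_m=D^mw(0)/m!$. Your function $p(\zeta)=l_u(w(\zeta u))/\zeta$ is a Carath\'eodory function whose coefficient of $\zeta^{m-1}$ is $l_u(P_m(u))$. By Lemma~\ref{Lem1}, $|l_u(P_m(u))|\le 2$ for all $u\in\partial\mathbb{B}$ and $l_u\in T(u)$. So the numerical radius $v(P_m)=\sup\{|l_x(P_m(x))|:\|x\|=1,\ l_x\in T(x)\}$ is at most $2$.
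Harris's inequality $\|P\|\le m^{m/(m-1)}v(P)$ for $X$-valued homogeneous polynomials of degree $m\ge 2$ then gives $\|P_m\|\le 4m$. Hence
\[
\|w(x)\|\le \|x\|+\sum_{m\ge 2}4m\|x\|^m\le \frac{4\|x\|}{(1-\|x\|)^2},\qquad x\in\mathbb{B}.
\]
Let $r=\|z\|$. Then $\|v'(t)\|\le 4r/(1-r)^2$ on the maximal interval $[0,T)$, so $v(t)$ is Cauchy as $t\to T^-$ and converges to a point of $\overline{\mathbb{B}}_r\subset\mathbb{B}$. Local existence at that point contradicts maximality. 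In finite dimensions $\overline{\mathbb{B}}_r$ is compact and no such lemma is needed. With this step supplied, the rest of your sufficiency argument goes through as written.
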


 There are now many references for starlike mappings in higher dimensions. For more information, see the recent monographs by Gong  \cite{Gong} and Kohr \& Liczberski  \cite{Kohr- Liczberski-1998}. Some of the foundational and original papers in this area include
 \cite{Chu}, \cite{SSQ_CHMS_1990}, \cite{Su2}-\cite{Xu-He-Xu}.
 \vspace{1.1mm}

 By considering the biholomorphic mappings $F:\mathbb{B}\to X$ of the form $F(z)=f(z)z$ for $z\in\mathbb{B}$, where $f:\mathbb{B}\to \mathbb{C}$ is a holomorphic function satisfying $f(0)=1$, we begin by giving a sufficient condition concerning the coefficients of  a function $F$ to be starlike in the complex Banach space.
 \begin{theo}\label{Th-21}Let $f\in\mathcal{H}(\mathbb{B},\mathbb{C})$ with $f(0)=1$ and $F(z)=zf(z)\in\mathcal{S}(\mathbb{B})$. If
 \begin{align*}	
 \sum\limits_{n=2}^{\infty}n\left|l_{z}\left(\frac{D^{n}F(0)(z^{n})}{n!||z||^n}\right)\right|\leq 1
\end{align*}	
 for $z\in\mathbb{B}\backslash\{0\}$ and $l_z\in T(z)$, then $F(z)=zf(z)\in\mathcal{S}^*(\mathbb{B})$.	
 \end{theo}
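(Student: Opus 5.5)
The plan is to reduce the starlikeness criterion of Proposition A to a one-variable statement on each complex line through the origin. On each such line the hypothesis becomes exactly the Clunie--Keogh condition of Theorem B.

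\textbf{Step 1: compute $[DF(z)]^{-1}F(z)$.} Since $F(z)=zf(z)$, for $w\in X$ we have $DF(z)w=f(z)w+(Df(z)w)\,z$. In particular
\begin{align*}
DF(z)z=\bigl(f(z)+Df(z)z\bigr)z .
\end{align*}
Because $F\in\mathcal{S}(\mathbb{B})$, $DF(z)$ is invertible, so $f(z)+Df(z)z\neq 0$ for $z\neq 0$. Also $f(z)\neq0$ on $\mathbb{B}$, since otherwise $F$ would vanish at a nonzero point, contradicting injectivity together with $F(0)=0$. Looking for the solution in the form $w=cz$ gives
\begin{align*}
[DF(z)]^{-1}F(z)=\frac{f(z)}{f(z)+Df(z)z}\,z .
\end{align*}
Hence, for every $l_z\in T(z)$,
\begin{align*}
l_z\bigl([DF(z)]^{-1}F(z)\bigr)=\|z\|\,\frac{f(z)}{f(z)+Df(z)z}.
\end{align*}
By Proposition A, it therefore suffices to show
\begin{align*}
\Re\Bigl\{1+\frac{Df(z)z}{f(z)}\Bigr\}>0 \quad\text{for } z\in\mathbb{B}\setminus\{0\},
\end{align*}
since a complex number and its reciprocal have real parts of the same sign.

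\textbf{Step 2: restrict to a complex line.} Fix $z\neq0$, put $u=z/\|z\|$, and define $g(\zeta)=\zeta f(\zeta u)$ for $\zeta\in\mathbb{U}$. Then $g(\zeta)u=F(\zeta u)$, so $g$ is holomorphic, normalized, and univalent because $F$ is injective. Expanding $f$ in homogeneous polynomials gives
\begin{align*}
\frac{D^nF(0)(z^n)}{n!}=\frac{D^{n-1}f(0)(z^{n-1})}{(n-1)!}\,z .
\end{align*}
Therefore
\begin{align*}
l_z\Bigl(\frac{D^nF(0)(z^n)}{n!\|z\|^n}\Bigr)=\frac{D^{n-1}f(0)(u^{n-1})}{(n-1)!}=:b_n ,
\end{align*}
which is exactly the $n$-th Taylor coefficient of $g(\zeta)=\zeta+\sum_{n\ge2}b_n\zeta^n$. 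This quantity does not depend on the choice of $l_z$, because $l_z(z)=\|z\|$ and only scalar multiples of $z$ appear. The hypothesis thus reads $\sum_{n\ge2}n|b_n|\le1$.

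\textbf{Step 3: apply Theorem B and return to $\mathbb{B}$.} By Theorem B, $g\in\mathcal{S}^*(\mathbb{U})$, so $\Re\{\zeta g'(\zeta)/g(\zeta)\}>0$ on $\mathbb{U}\setminus\{0\}$. A direct computation gives
\begin{align*}
\frac{\zeta g'(\zeta)}{g(\zeta)}=1+\frac{Df(\zeta u)(\zeta u)}{f(\zeta u)} .
\end{align*}
Taking $\zeta=\|z\|\in(0,1)$ yields $\Re\{1+Df(z)z/f(z)\}>0$, which by Step 1 gives $F\in\mathcal{S}^*(\mathbb{B})$.

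Alternatively, one can avoid invoking Theorem B and argue directly. Writing $\zeta g'/g=(1+\sum nb_n\zeta^{n-1})/(1+\sum b_n\zeta^{n-1})$, the coefficient bound gives
\begin{align*}
\Bigl|\frac{\zeta g'}{g}-1\Bigr|\le \frac{\sum (n-1)|b_n|}{1-\sum|b_n|}\le 1 ,
\end{align*}
with strict inequality inside the disk; this yields positivity of the real part.

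The main obstacle is Step 1: justifying that the inverse applied to $F(z)$ really is the scalar multiple of $z$ found above. This needs the nonvanishing of $f$ and of $f+Df(z)z$, both of which I plan to extract from the biholomorphy of $F$ as indicated.
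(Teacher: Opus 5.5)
Your proposal is correct and follows essentially the same route as the paper: compute $[DF(z)]^{-1}F(z)=\frac{f(z)}{f(z)+Df(z)z}\,z$, restrict to the slice $\zeta\mapsto\zeta z_0$, and identify $l_{z_0}\bigl(D^nF(0)(z_0^n)/n!\bigr)$ with the Taylor coefficients of $\zeta f(\zeta z_0)$; then show $\Re\{1+Df(\zeta z_0)(\zeta z_0)/f(\zeta z_0)\}>0$ and evaluate at $\zeta=\|z\|$ before invoking Proposition A. The differences are minor: you obtain the inverse via the ansatz $w=cz$ plus uniqueness, and you explicitly justify $f\neq0$ and $f+Df(z)z\neq0$, which the paper leaves implicit; you also cite Theorem B for the slice function, whereas the paper carries out the Clunie--Keogh estimate $|g(\zeta)-1|<1$ by hand, exactly as in your ``alternative'' paragraph.
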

 In the next result, we show that Bieberbach conjecture holds for the function $F(z)=zf(z)$ in $\mathcal{S}^*(\mathbb{B})$.
 \begin{theo}\label{Th-22}Let $f\in\mathcal{H}(\mathbb{B},\mathbb{C})$ with $f(0)=1$ and $F(z)=zf(z)\in\mathcal{S}^*(\mathbb{B})$. 
 Then for $z\in\mathbb{B}\backslash\{0\}$ and $l_z\in T(z)$, we have
 \begin{align*}
 	\left|l_{z}\left(\frac{D^{k}F(0)(z^{k})}{k!||z||^k}\right)\right|\leq k
 \end{align*}
 and
 \begin{align}\label{Th-22t}
 (n+1)^2\left|l_{z}\left(\frac{D^{n}F(0)(z^{n})}{n!||z||^n}\right)\right|^2\leq4\left(1+\sum\limits_{k=2}^{n}k	\left|l_{z}\left(\frac{D^{k}F(0)(z^{k})}{k!||z||^k}\right)\right|^2\right).	
 \end{align}
 Equality holds for the function $F(z)=f(z)z$, where
 \begin{align*}
 	f(z)=\frac{1}{(1-l_u(z))^2},\quad z\in\mathbb{B}\setminus\{0\},\quad ||u||=1.
 \end{align*}
 \end{theo}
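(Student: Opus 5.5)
The plan is to reduce Theorem \ref{Th-22} to the one-variable Theorems C and E by slicing along complex lines. Fix $z\in\mathbb{B}\setminus\{0\}$ and $l_z\in T(z)$, and put $u=z/\|z\|$, so that $\|u\|=1$ and $l_z(u)=1$. Since $l_z$ is linear and $l_z(z)=\|z\|$, we also have $l_z\in T(u)$.

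Define $p(\zeta)=l_z(F(\zeta u))=\zeta f(\zeta u)$ for $\zeta\in\mathbb{U}$. Then $p$ is holomorphic on $\mathbb{U}$, with $p(0)=0$ and $p'(0)=f(0)=1$. Expanding $F(\zeta u)=\sum_n \frac{1}{n!}D^nF(0)(u^n)\zeta^n$ and using homogeneity, $D^nF(0)(u^n)=D^nF(0)(z^n)/\|z\|^n$. Hence
\begin{align*}
p(\zeta)=\zeta+\sum_{n\ge 2}a_n\zeta^n,\qquad a_n=l_z\left(\frac{D^{n}F(0)(z^{n})}{n!\|z\|^n}\right).
\end{align*}
So it suffices to show $p\in\mathcal{S}^*(\mathbb{U})$. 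The bound $|a_k|\le k$ then follows from Theorem C, and inequality \eqref{Th-22t} follows from Theorem E applied to $p$.

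To show $p$ is starlike, use the special form $F(w)=wf(w)$. A direct computation gives $DF(w)h=f(w)h+(Df(w)h)w$. To invert this, solve $DF(w)h=F(w)=f(w)w$ with the ansatz $h=cw$, which leads to $c\,(f(w)+Df(w)w)=f(w)$. Therefore
\begin{align*}
[DF(w)]^{-1}F(w)=\frac{f(w)}{f(w)+Df(w)w}\,w.
\end{align*}
Here $f(w)+Df(w)w\neq 0$ because $DF(w)$ is invertible: otherwise $DF(w)w=0$. Now take $w=\zeta u$ with $\zeta\neq0$ and the functional $l_w=\frac{\bar\zeta}{|\zeta|}l_z\in T(w)$. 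Proposition A gives
\begin{align*}
\Re\left\{\frac{f(\zeta u)}{f(\zeta u)+\zeta\frac{d}{d\zeta}f(\zeta u)}\,|\zeta|\right\}>0.
\end{align*}
Since $p(\zeta)/(\zeta p'(\zeta))=f(\zeta u)/\bigl(f(\zeta u)+\zeta \frac{d}{d\zeta}f(\zeta u)\bigr)$, this yields $\Re\{\zeta p'(\zeta)/p(\zeta)\}>0$ on $\mathbb{U}\setminus\{0\}$, and at $0$ the quotient equals $1$. In addition, $p(\zeta)\ne0$ for $\zeta\neq0$, since $F$ is injective and $F(0)=0$. Hence $p$ is starlike, and in particular univalent.

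For equality, take $f(z)=(1-l_u(z))^{-2}$ and choose $z=u$, $l_z=l_u$. Then $p(\zeta)=\zeta/(1-\zeta)^2$ is the Koebe function, so $a_n=n$. Both $|a_n|\le n$ and the inequality of Theorem E become equalities, since $(n+1)^2n^2=4(1+\sum_{k=2}^n k^3)=n^2(n+1)^2$. One should also verify that this $F$ lies in $\mathcal{S}^*(\mathbb{B})$, using the formula above with Proposition A. The same slice computation shows the relevant quotient equals $(1-l_u(w))/(1+l_u(w))$ times a positive factor, and its real part is positive since $|l_u(w)|<1$.

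The main obstacle is the inversion formula for $[DF(w)]^{-1}F(w)$ together with choosing the support functional at $\zeta u$ correctly, so that the phase $\bar\zeta/|\zeta|$ cancels properly. Everything else is the classical theory.
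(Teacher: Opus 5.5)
Your proof is correct, and its first half is the paper's own slice reduction. The auxiliary function $g(\zeta)=1+Df(\zeta z_0)(\zeta z_0)/f(\zeta z_0)$ of \eqref{Th-21.3} is exactly your $\zeta p'(\zeta)/p(\zeta)$. The paper gets $\Re g>0$ from the same formula \eqref{Th-21.2} for $[DF(w)]^{-1}F(w)$ and the same phase adjustment $l_{\zeta z_0}=\frac{|\zeta|}{\zeta}l_{z_0}$.

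The difference is in how you finish. The paper never recognizes $p(\zeta)=\zeta f(\zeta z_0)$ as an element of $\mathcal{S}^*(\mathbb{U})$. It keeps working with the Carath\'eodory function $g$ and re-derives the classical estimates by hand. It proves $|a_k|\le k$ by induction from $|g_k|\le 2$ (Lemma \ref{Lem1}), and \eqref{Th-22t} by Clunie's method: write $g=(1+\omega)/(1-\omega)$, truncate, apply Parseval on $|\zeta|=r$, and let $r\to 1$. You instead observe that the slice is a normalized starlike function whose Taylor coefficients are precisely $l_z(D^nF(0)(z^n))/(n!\|z\|^n)$, and you quote Theorems C and E.

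Your route is shorter and shows that Theorem \ref{Th-22} is a direct corollary of the one-variable results. The same observation gives Theorem \ref{Th-23} at once from Theorem G, and carries the one-variable equality characterizations over to each slice. The paper's route is more self-contained: it uses only the Carath\'eodory bound and Parseval's identity rather than the full classical theorems.

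Two minor repairs are needed. First, in the equality part $u\notin\mathbb{B}$, so take $z=ru$ with $0<r<1$ and $l_z=l_u$, as the paper does. Then $z/\|z\|=u$ and the slice is still $\zeta/(1-\zeta)^2$. Second, your inversion formula assumed $DF(w)$ is invertible, so before applying Proposition A to the extremal map you should check that it is locally biholomorphic. This holds because $DF(w)h=f(w)h+(Df(w)h)w$ is invertible when $f(w)\neq0$ and $f(w)+Df(w)w=(1+l_u(w))(1-l_u(w))^{-3}\neq0$, and both hold since $|l_u(w)|<1$.
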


 The next result is related to difference of coefficients for the function $F(z)=zf(z)$ in $\mathcal{S}^*(\mathbb{B})$.
 
 \begin{theo}\label{Th-23}Let $f\in\mathcal{H}(\mathbb{B},\mathbb{C})$ with $f(0)=1$ and $F(z)=zf(z)\in\mathcal{S}^*(\mathbb{B})$. Then for $k\geq2$, we have
\begin{align}\label{Th-23t}
\displaystyle
&\left|\;\left|\frac{l_z\left(D^{k+1}F(0)\left(z^{k+1}\right)\right)}{(k+1)!||z||^{k+1}}\right|-\left|\frac{l_z\left(D^{k}F(0)\left(z^{k}\right)\right)}{k!||z||^{k}} \right|\;\right|\leq 1,\quad z\in\mathbb{B}\backslash\{0\},\quad l_z\in T(z).
\end{align}
Equality holds for the function $F$ defined as in Theorem \ref{Th-22}.
 \end{theo}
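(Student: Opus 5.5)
The plan is to reduce Theorem \ref{Th-23} to Theorem G by slicing. Fix $z\in\mathbb{B}\setminus\{0\}$ and $l_z\in T(z)$, put $u=z/\|z\|$, and define the one-variable function
\begin{align*}
p(\zeta)=l_u\left(F(\zeta u)\right)=\zeta f(\zeta u),\quad \zeta\in\mathbb{U},
\end{align*}
using $l_u=l_z$ and $l_u(u)=1$. Expanding $F$ in its Taylor series at $0$ and using homogeneity of $D^kF(0)$ gives
\begin{align*}
p(\zeta)=\zeta+\sum_{k=2}^{\infty}a_k\zeta^k,\qquad a_k=l_z\left(\frac{D^kF(0)(u^k)}{k!}\right)=l_z\left(\frac{D^kF(0)(z^k)}{k!\|z\|^k}\right).
\end{align*}
Since $F(z)=zf(z)$, we have $D^kF(0)(z^k)/k!=P_{k-1}(z)\,z$, where $P_{k-1}$ is the homogeneous part of degree $k-1$ of $f$. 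Hence $a_k=P_{k-1}(u)$ is exactly the $(k-1)$-st Taylor coefficient of $\zeta\mapsto f(\zeta u)$. The quantities in \eqref{Th-23t} are therefore $|a_{k+1}|$ and $|a_k|$.

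The key step is to show $p\in\mathcal{S}^*(\mathbb{U})$. For $F=zf$, compute $DF(w)h=f(w)h+(Df(w)h)w$. Solving $DF(w)v=F(w)$ with the ansatz $v=cw$ gives $c\left(f(w)+Df(w)w\right)=f(w)$, so
\begin{align*}
[DF(w)]^{-1}F(w)=\frac{f(w)}{f(w)+Df(w)w}\,w .
\end{align*}
Take $w=\zeta u$ with $0<|\zeta|<1$. The functional $l_w=\frac{\bar\zeta}{|\zeta|}l_u$ lies in $T(w)$. Proposition A then gives
\begin{align*}
\Re\left\{\frac{f(\zeta u)}{f(\zeta u)+\zeta\frac{d}{d\zeta}f(\zeta u)}\right\}>0.
\end{align*}
This is $\Re\{p(\zeta)/(\zeta p'(\zeta))\}>0$, which is equivalent to $\Re\{\zeta p'(\zeta)/p(\zeta)\}>0$. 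Here I use that $f(\zeta u)\neq0$ and that $p'\ne0$, both of which follow from $F$ being locally biholomorphic. Since $p(0)=0$ and $p'(0)=1$, the standard one-variable criterion gives $p\in\mathcal{S}^*(\mathbb{U})$, and in particular $p$ is univalent.

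Theorem G applied to $p$ now yields $\left||a_{k+1}|-|a_k|\right|\le1$ for $k\ge2$, which is \eqref{Th-23t}.

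For the equality statement, take $f(z)=(1-l_u(z))^{-2}$ as in Theorem \ref{Th-22}; this $F$ is starlike, as established there. Choose $z$ along $u$, so that $l_z=l_u$. Then $p(\zeta)=\zeta/(1-\zeta)^2$ is the Koebe function, with $a_k=k$, so the difference equals $1$.

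I expect the main obstacle to be the justification of the slice reduction. Specifically, one must confirm that the functional $l_w$ built from $l_z$ belongs to $T(w)$ for every point on the slice, and that the inverse $[DF]^{-1}$ has the claimed scalar form. The latter requires $f+Df(\cdot)(\cdot)\neq0$, and this follows from the invertibility of $DF$ applied in the direction $w$.
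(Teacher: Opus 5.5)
Your argument is correct and follows the same route as the paper. Both reduce to the slice $\zeta\mapsto\zeta f(\zeta z_0)$, whose starlikeness comes from $[DF(w)]^{-1}F(w)=\frac{f(w)}{f(w)+Df(w)w}\,w$ together with Proposition A; the only difference is that the paper re-runs Leung's Lebedev--Milin argument on this slice (choosing $\xi\in\partial\mathbb{U}$ to make the exponent nonpositive), whereas you cite Theorem G directly. One small correction: local biholomorphicity alone does not give $f(\zeta u)\neq 0$ when $\dim X=1$, so read it off from $\Re\{f/(f+Df(\zeta u)(\zeta u))\}>0$ or from the injectivity of $F$ instead.
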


 \subsection{{Auxiliary lemma}}
 
 Let $\mathcal{P}$ denote the class of analytic functions $p$ in $\mathbb{U}$ satisfying $p(0) = 1$ and $\Re\{p(z)\}> 0$ for all $z \in \mathbb{U}$. Member of $\mathcal{P}$ is called a Carath\'eodory function.  The lemma below is instrumental in establishing our main results.
\begin{lem}\label{Lem1} {\em{\cite{Cho-Kumar-Ravichandran-2019}}}
Let $p(z)=1+\sum_{n=1}^{\infty}p_nz^n\in\mathcal{P}$. Then
\begin{align*}
\displaystyle
|p_n|\leq 2\quad \text{and}\quad |p_n-p_mp_{n-m}|\leq 2,\quad n\geq 2,\;1\leq m\leq n-1.
\end{align*}
\end{lem}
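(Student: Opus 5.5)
The plan is to derive both inequalities from the Herglotz representation of $p\in\mathcal{P}$ and then apply a Cauchy--Schwarz argument in $L^2(\mu)$. By the Herglotz theorem there is a probability measure $\mu$ on $[0,2\pi)$ with
\begin{align*}
p(z)=\int_0^{2\pi}\frac{1+e^{-it}z}{1-e^{-it}z}\,d\mu(t),\qquad z\in\mathbb{U}.
\end{align*}
Expanding in powers of $z$ gives $p_k=2c_k$ for $k\geq 1$, where $c_k=\int_0^{2\pi}e^{-ikt}\,d\mu(t)$. Since $\mu$ has total mass one, $c_0=1$ and $|c_k|\leq 1$. The first inequality $|p_n|\leq 2$ follows at once.

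For the second inequality, write $q=n-m\geq 1$. The identity $p_n-p_mp_q=2(c_n-2c_mc_q)$ reduces the claim to showing $|c_n-2c_mc_q|\leq 1$. To do this I would introduce the centered functions
\begin{align*}
\phi(t)=e^{-imt}-c_m,\qquad \psi(t)=e^{-iqt}-c_q .
\end{align*}
Because $e^{-imt}e^{-iqt}=e^{-int}$ and $\mu$ is a probability measure, a direct expansion gives
\begin{align*}
\int_0^{2\pi}\phi\,\psi\,d\mu=c_n-c_mc_q .
\end{align*}
A similar expansion gives $\int_0^{2\pi}|\phi|^2\,d\mu=1-|c_m|^2$, and likewise for $\psi$. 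Note that the product $\phi\psi$ carries no conjugate; the Cauchy--Schwarz inequality $\left|\int \phi\psi\, d\mu\right|\leq \|\phi\|_{L^2(\mu)}\|\psi\|_{L^2(\mu)}$ still applies, since it only needs $|\psi|=|\bar\psi|$.

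With $a=|c_m|$ and $b=|c_q|$, both in $[0,1]$, this yields
\begin{align*}
|c_n-2c_mc_q|\leq |c_n-c_mc_q|+|c_mc_q|\leq \sqrt{(1-a^2)(1-b^2)}+ab .
\end{align*}
Setting $a=\cos\alpha$ and $b=\cos\beta$ with $\alpha,\beta\in[0,\pi/2]$, the right-hand side equals $\sin\alpha\sin\beta+\cos\alpha\cos\beta=\cos(\alpha-\beta)\leq 1$. Multiplying by $2$ gives $|p_n-p_mp_{n-m}|\leq 2$.

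The only step that needs real insight is the decomposition $c_n-2c_mc_q=(c_n-c_mc_q)-c_mc_q$. It turns the problem into a covariance bound plus a product term, and these recombine through the cosine addition formula. Everything else is routine bookkeeping with the Herglotz coefficients.
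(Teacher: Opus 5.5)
The paper gives no proof of this lemma; it only quotes it from Cho--Kumar--Ravichandran, so there is no in-paper argument to compare yours with. Your proof is correct and self-contained. The Herglotz coefficients satisfy $p_k=2c_k$ with $|c_k|\le 1$, and the identities $\int\phi\psi\,d\mu=c_n-c_mc_q$ and $\int|\phi|^2\,d\mu=1-|c_m|^2$ both check out. Cauchy--Schwarz applies to the unconjugated product exactly as you say, and $\sqrt{(1-a^2)(1-b^2)}+ab\le 1$ finishes the argument; the case $m=n-m$ needs no separate treatment. Your covariance route actually gives more than the lemma. Writing $c_n-2wc_mc_q=(c_n-c_mc_q)+(1-2w)c_mc_q$, the same steps give $|p_n-wp_mp_{n-m}|\le 2\max\{1,|1-2w|\}$ for every complex $w$, which is Efraimidis's generalization of Livingston's inequality, at no extra cost.
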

\begin{proof}[\bf Proof of Theorem \ref{Th-21}]
By the given condition, we have $f\in\mathcal{H}(\mathbb{B},\mathbb{C})$ with $f(0)=1$ and $F(z)=zf(z)\in\mathcal{S}(\mathbb{B})$. Let $z\in \mathbb{B}\backslash\{0\}$ be fixed. Suppose $z_0=z/\|z\|$. We define the function $g:\mathbb{U}\to \mathbb{C}$ as follows
\begin{align}\label{Th-21.0}	
	\displaystyle
	g(\zeta)=\begin{cases}
		\frac{\zeta}{l_{z_0}\left([D F(\zeta z_0)]^{-1}F(\zeta z_0)\right)},& \zeta\neq 0,\\[8pt]
		1& \zeta=0.
	\end{cases}
\end{align}
Using a similar method as in Theorem 7.1.14 \cite{Graham-Kohr}, we have
\begin{align}\label{Th-21.1}
	[D F(z)]^{-1}\eta=\frac{1}{f(z)}\left\lbrack \eta-\frac{\frac{(Df(z){\eta})z}{f(z)}}{1+\frac{Df(z)z}{f(z)}}\right\rbrack.
\end{align}
Therefore,
\begin{align}\label{Th-21.2}
	[D F(z)]^{-1}F(z)=\frac{zf(z)}{f(z)+D f(z)z},
\end{align}
which implies that
\begin{align}\label{Th-21.3}
g(\zeta)=\frac{\zeta}{l_{z_0}\left([D F(\zeta z_0)]^{-1}F(\zeta z_0)\right)}=1+\frac{Df(\zeta z_0) \zeta z_0}{f(\zeta z_0)}.
\end{align}
If
\begin{align}\label{Th-21.4}
f(z)=1+\sum\limits_{k=1}^{\infty} \frac{1}{k!}D^kf(0)(z^k),
\end{align}	
then it is clearly
\begin{align}\label{Th-21.5}
Df(z)(z)=\sum\limits_{k=1}^{\infty} \frac{k}{k!}D^kf(0)(z^k).
\end{align}	
Since $F(z)=zf(z)$, we have (see \cite[Theorem 2]{Xiong-Yu-Wang-2025})
\begin{align}\label{Th-21.6}
D^mF(0)(z^m)=mD^{m-1}f(0)(z^{m-1})z
\end{align}
where $m\in\mathbb{N}$. As $z_0=z/||z||$, we deduce from \eqref{Th-21.6} that
\begin{align}\label{Th-21.7}
\displaystyle
\frac{l_{z_0}(D^mF(0)(z_0^m))}{m!}=\frac{D^{m-1}f(0)(z_0^{m-1})}{(m-1)!}.
\end{align}
On the other hand, using \eqref{Th-21.7} to \eqref{Th-21.4} and \eqref{Th-21.5}, we find that
\begin{align}\label{Th-21.8}
f(\zeta z_0)=1+\sum\limits_{k=1}^{\infty} l_{z_0}\left(\frac{1}{(k+1)!}D^{k+1}F(0)(z_0^{k+1})\right)\zeta^k
\end{align}	
and
\begin{align}\label{Th-21.9}
Df(\zeta z_0)(\zeta z_0)=\sum\limits_{k=1}^{\infty} kl_{z_0}\left(\frac{1}{(k+1)!}D^{k+1}F(0)(z_0^{k+1})\right)\zeta^{k}.
\end{align}	
Now using \eqref{Th-21.8} and \eqref{Th-21.9}, we deduce that
\begin{align*}
&|g(\zeta)f(\zeta z_0)-f(\zeta z_0)|-|f(\zeta z_0)|\\=&|Df(\zeta z_0))(\zeta z_0)|-|f(\zeta z_0)|\nonumber\\=&
\left|\sum\limits_{k=1}^{\infty}kl_{z_0}\left(\frac{D^{k+1}F(0)(z_0^{k+1})}{(k+1)!}\right)\zeta^{k}\right|-\left|1+\sum\limits_{k=1}^{\infty}l_{z_0}\left(\frac{D^{k+1}F(0)(z_0^{k+1})}{(k+1)!}\right)\zeta^{k}\right|\nonumber\\<&
\sum\limits_{k=1}^{\infty}k\left|l_{z_0}\left(\frac{D^{k+1}F(0)(z_0^{k+1})}{(k+1)!}\right)\right|-\left(1-\sum\limits_{k=1}^{\infty}\left|l_{z_0}\left(\frac{D^{k+1}F(0)(z_0^{k+1})}{(k+1)!}\right)\right|\right)\nonumber\\=&
\sum\limits_{n=2}^{\infty}n\left|l_{z_0}\left(\frac{D^{n}F(0)(z_0^{n})}{n!}\right)\right|-1\leq 0\nonumber
\end{align*}
and so for $\zeta\in\mathbb{U}\backslash\{0\}\}$, we have $\left|g(\zeta)-1\right|<1.$ This shows that  $\Re\{g(\zeta)\}>0$ for all $\zeta\in\mathbb{U}\backslash\{0\}$, i.e.,
\begin{align}\label{Th-21.10}
\Re\left\lbrace \frac{\zeta}{l_{z_0}\left([D F(\zeta z_0)]^{-1}F(\zeta z_0)\right)}\right\rbrace>0
\end{align}
for all $\zeta\in\mathbb{U}\backslash\{0\}$. Substituting $\zeta=||z||$ into \eqref{Th-21.10} and using the fact that
$l_{\alpha z}=\frac{|\alpha|}{\alpha}l_z$
for each $\alpha\in\mathbb{U}\backslash\{0\}$, we obtain
\begin{align*}
\Re\left\lbrace \frac{||z||}{l_{z}\left([D F(z)]^{-1}F( z)\right)}\right\rbrace>0,\quad z\in\mathbb{B}\backslash\{0\}, \quad l_z\in T(z).
\end{align*}
Consequently $F\in \mathcal{S}^*(\mathbb{B})$.
\end{proof}

\begin{proof}[\bf Proof of Theorem \ref{Th-22}]
By the given condition, $F\in\mathcal{S}^*(\mathbb{B})$. Then we have
\begin{align}\label{Th-22.0}
	\displaystyle
	\Re\left\lbrace \frac{\|z\|}{l_z\left([D F(z)]^{-1}F(z)\right)}\right\rbrace>0, \quad z\in\mathbb{B}\backslash \{0\},\quad l_z\in T(z).
\end{align}
Let $z\in \mathbb{B}\backslash\{0\}$ be fixed. Suppose $z_0=z/\|z\|$. Also, we consider the function $g:\mathbb{U}\to \mathbb{C}$ defined by \eqref{Th-21.0}.
Clearly $g\in\mathcal{H}(\mathbb{U})$ and $g(0)=1$.
 Therefore for $\zeta\in\mathbb{U}\backslash\{0\}$, we have
\begin{align}\label{Th-22.1}
	\displaystyle
	g(\zeta)=\frac{\zeta}{l_{z_0}\left([D F(\zeta z_0)]^{-1}F(\zeta z_0)\right)}=\frac{\|\zeta z_0\|}{l_{\zeta z_0}\left([D F(\zeta z_0)]^{-1}F(\zeta z_0)\right)}.
\end{align}	
Consequently $\Re\{g(\zeta)\}>0$ for all $\zeta\in\mathbb{U}$. Suppose
\begin{align}\label{Th-22.2}
g(\zeta)=1+\sum_{k=1}^{\infty}g_k\zeta^k.
\end{align}

Using \eqref{Th-21.8}, \eqref{Th-21.9} and \eqref{Th-22.2} to \eqref{Th-21.3}, we get
\begin{align}\label{Th-22.3}
&\left(1+\sum_{k=1}^{\infty}g_k\zeta^k\right)\left(1+\sum\limits_{k=1}^{\infty}l_{z_0}\left(\frac{1}{(k+1)!}D^{k+1}F(0)(z_0^{k+1})\right)
\zeta^{k}\right)\\=&
1+\sum\limits_{k=1}^{\infty} (k+1)l_{z_0}\left(\frac{1}{(k+1)!}D^{k+1}F(0)(z_0^{k+1})\right)\zeta^{k}.\nonumber	
\end{align}
By equating coefficients on the both sides of \eqref{Th-22.3}, we get
\begin{align}\label{Th-22.4}
(k+1)l_{z_0}\left(\frac{1}{(k+1)!}D^{k+1}F(0)(z_0^{k+1})\right)=&l_{z_0}\left(\frac{1}{(k+1)!}D^{k+1}F(0)(z_0^{k+1})\right)\\+&\sum\limits_{i=1}^{k-1}g_{k-i}\;l_{z_0}\left(\frac{1}{(i+1)!}D^{i+1}F(0)(z_0^{i+1})\right)+g_{k}.\nonumber
\end{align}
We now use mathematical induction and suppose
\begin{align*}
\left|l_{z_0}\left(\frac{1}{i!}D^{i}F(0)(z_0^{i})\right)\right|\leq i\quad \text{for}\;\;i=2,3,\ldots,k.
\end{align*}
Applying Lemma \ref{Lem1} to \eqref{Th-22.4}, we get
\begin{align*}
k\left|l_{z_0}\left(\frac{1}{(k+1)!}D^{k+1}F(0)(z_0^{k+1})\right)\right|\leq 2\sum\limits_{i=1}^{k-1}(i+1)+2=k(k+1),
\end{align*}
which shows that
\begin{align*}
\left|l_{z_0}\left(\frac{1}{(k+1)!}D^{k+1}F(0)(z_0^{k+1})\right)\right|\leq k+1.
\end{align*}
Then by induction, we have
\begin{align*}
\left|l_{z_0}\left(\frac{1}{k!}D^{k}F(0)(z_0^{k})\right)\right|\leq k.
\end{align*}
\par This proves the first inequality.
\par Since $\Re\{g(\zeta)\}>0$ for all $\zeta\in\mathbb{U}$, there exists a Schwarz function $\omega(\zeta)$ with $\omega(\zeta)=\sum_{k=1}^{\infty}\omega_k\zeta^k$ such that $g(\zeta)=(1+\omega(\zeta))/(1-\omega(\zeta))$ and so from \eqref{Th-21.3}, we get
\begin{align}\label{Th-22.5}
\left(2f(\zeta z_0)+Df(\zeta z_0) \zeta z_0\right)\omega(\zeta)=Df(\zeta z_0) \zeta z_0.
\end{align}
Using \eqref{Th-21.8} and \eqref{Th-21.9} to \eqref{Th-22.5}, we obtain
\begin{align}\label{Th-22.6}
\left(2+\sum\limits_{k=1}^{\infty}(k+2)a_{k+1} \zeta^k \right)\omega(\zeta)=\sum\limits_{k=1}^{\infty}ka_{k+1}\zeta^k,
\end{align}
where
\begin{align}\label{Th-22.7}
a_{k+1}=l_{z_0}\left(\frac{1}{(k+1)!}D^{k+1}F(0)(z_0^{k+1})\right)
\end{align}
Equating coefficients on the both sides of \eqref{Th-22.6}, one can easily get
\begin{align*}
2\omega_k+3a_2\omega_{k-1}+\ldots+(k+1)a_k\omega_1=ka_{k+1}.
\end{align*}
This shows that the coefficient $a_{k+1}$ depends only on $a_2$, $a_3\ldots$,$a_k$. Therefore we can rewrite \eqref{Th-22.6} as follows
\begin{align}\label{Th-22.8}
	\left(2+\sum\limits_{k=1}^{n-1}(k+2)a_{k+1} \zeta^k \right)\omega(\zeta)=\sum\limits_{k=1}^{n}ka_{k+1}\zeta^k+\sum\limits_{k=n+1}^{\infty}b_k\zeta^k
\end{align}
for some $b_k$.
Let $\zeta=re^{i\theta}$. Then from \eqref{Th-22.8}, we get
\begin{align}\label{Th-22.9}
\left(2+\sum\limits_{k=1}^{n-1}(k+2)a_{k+1} r^ke^{ik\theta} \right)\omega(re^{i\theta})=\sum\limits_{k=1}^{n}ka_{k+1}r^ke^{ik\theta}+\sum\limits_{k=n+1}^{\infty}b_kr^ke^{ik\theta}
\end{align}
and
\begin{align}\label{Th-22.10}
\left(2+\sum\limits_{k=1}^{n-1}(k+2)\ol a_{k+1} r^ke^{-ik\theta} \right)\omega(re^{-i\theta})=\sum\limits_{k=1}^{n}k\ol a_{k+1}r^ke^{-ik\theta}+\sum\limits_{k=n+1}^{\infty}\ol b_kr^ke^{-ik\theta}.
\end{align}
Note that $|\omega(\zeta)|<1$ for all $\zeta\in\mathbb{U}$. We first multiply \eqref{Th-22.9} and \eqref{Th-22.10} and then integrating with respect to $\theta$ from $0$ to $2\pi$, we can find that
\begin{align*}
\sum\limits_{k=1}^{n}k^2|a_{k+1}|^2r^{2k}+\sum\limits_{k=n+1}^{\infty}|b_k|^2r^{2k}<4+\sum\limits_{k=1}^{n-1}(k+2)^2|a_{k+1}|^2r^{2k}.
\end{align*}
Letting $r\to 1$, we obtain
\begin{align*}
\sum\limits_{k=1}^{n}k^2|a_{k+1}|^2\leq 4+\sum\limits_{k=1}^{n-1}(k+2)^2|a_{k+1}|^2,
\end{align*}
i.e.,
\begin{align*}
n^2|a_{n+1}|^2\leq 4\left(1+\sum\limits_{k=1}^{n-1}(k+1)|a_{k+1}|^2\right)=4\left(1+\sum\limits_{i=2}^{n}i|a_{i}|^2\right),
\end{align*}
or
\begin{align*}
(m-1)^2|a_{m}|^2\leq 4\left(1+\sum\limits_{i=2}^{m-1}i|a_{i}|^2\right).
\end{align*}
Now adding $4m|a_{m}|^2$ on the both sides and then using \eqref{Th-22.7}, we get the required conclusion \eqref{Th-22t}.\vspace{1.9mm}

{\bf Sharpness part:} Let us consider $F(z)=f(z)z$, where
\begin{align*}
f(z)=\frac{1}{(1-l_u(z))^2},\quad z\in\mathbb{B},\quad ||u||=1.
\end{align*}

According to Lemma 2.2 \cite{Xu-He-Xu}, it is clear that $F\in\mathcal{S}^*(\mathbb{B})$. Note that $F(z)=\sum_{n=0}^{\infty}(n+1)(l_u(z))^{n}\;z$, $z\in\mathbb{B}$.
A straightforward computation shows that
\begin{align}\label{Th-22.11}
\frac{D^{k+1}F(0)\left(z^{k+1}\right)}{(k+1)!}=(k+1)(l_u(z))^k z\quad \text{and}\quad \frac{D^{k}F(0)\left(z^{k}\right)}{k!}=k(l_u(z))^{k-1} z.
\end{align}
Taking $z=ru\;(0<r<1)$, we get
\begin{align*}
\frac{l_z\left(D^{k+1}F(0)\left(z^{k+1}\right)\right)}{(k+1)!||z||^{k+1}}=k+1,\quad \frac{D^{k}F(0)\left(z^{k}\right)}{k!||z||^k}=k
\end{align*}
and so equality of \eqref{Th-22t} holds.
\end{proof}	

\begin{proof}[\bf Proof of Theorem \ref{Th-23}]
By the given condition, $F\in\mathcal{S}^*(\mathbb{B})$. Let $z\in \mathbb{B}\backslash\{0\}$ be fixed. Suppose $z_0=z/\|z\|$. We consider the function $g:\mathbb{U}\to \mathbb{C}$ defined by \eqref{Th-21.0}. Clearly
$g\in\mathcal{H}(\mathbb{U})$ with $g(0)=1$ and $\Re\{g(\zeta)\}>0$ for all $\zeta\in\mathbb{U}$.
Let
\begin{align}\label{Th-23.1}
\displaystyle
h(\zeta)=f(\zeta z_0)=1+D f(0)(z_0)\zeta+\frac{1}{2!}D^2 f(0)(z_0^2)\zeta^2+\ldots.
\end{align}
Clearly
\begin{align}\label{Th-23.2}
\displaystyle
D f(\zeta z_0)(\zeta z_0)=D f(0)(z_0)\zeta+D^2 f(0)(z_0^2)\zeta^2+\ldots
\end{align}
From \eqref{Th-23.1} and \eqref{Th-23.2}, we have $h'(\zeta)\zeta=D f(\zeta z_0)(\zeta z_0)$. Now from \eqref{Th-21.3}, we get
\begin{align*}
	g(\zeta)=1+\frac{\zeta h'(\zeta)}{h(\zeta)}.
\end{align*}
Now from \eqref{Th-22.2}, we have
\begin{align*}
h'(\zeta)/h(\zeta)=\sum_{k=1}^{\infty}g_k\zeta^{k-1}.
\end{align*}
Integration gives
\begin{align*}
	\displaystyle
	\log h(\zeta)=\sum\limits_{k=1}^{\infty}\frac{1}{k}g_k\zeta^k.
\end{align*}
Also for $\xi \in\partial \mathbb{U}$, we have
\begin{align}\label{Th-23.3}
	\log \left\lbrace (1-\xi \zeta)h(\zeta)\right\rbrace=\sum\limits_{k=1}^{\infty}\alpha_k\zeta^k,
\end{align}
where
\begin{align}\label{Th-23.4}
	\alpha_k=\frac{1}{k}(g_k-\xi^k).
\end{align}
On the other hand, from \eqref{Th-23.1}, we get
\begin{align}\label{Th-23.5}
(1-\xi \zeta)h(\zeta)=\sum\limits_{k=0}^{\infty}\beta_k\zeta^k,
\end{align}
where
\begin{align}\label{Th-23.6}
	\begin{cases}
		\beta_0=&1,\\[8pt]
		\beta_1=&Df(0)(z_0)-\xi,\\[8pt] \beta_k=&\frac{1}{(k+1)!}D^{k+1}f(0)\left(z_0^{k+1}\right)-\xi\frac{1}{k!} D^kf(0)\left(z_0^k\right)\quad \text{for} \;\;k\geq 2.
	\end{cases}
\end{align}
Now from \eqref{Th-23.3} and \eqref{Th-23.5}, we obtain
\begin{align*}
	\sum\limits_{k=0}^{\infty}\beta_k \zeta^k=\exp\left\lbrace \sum\limits_{k=1}^{\infty} \alpha_k \zeta^k\right\rbrace.
\end{align*}
Applying Third Lebedev-Milin inequality (see \cite[pp.143]{Duren-1983}), we get
\begin{align*}
	|\beta_k|^2\leq \exp\left\lbrace \sum\limits_{n=1}^{k}n|\alpha_n|^2-\sum\limits_{n=1}^k\frac{1}{n}\right\rbrace
\end{align*}
and so from \eqref{Th-23.6}, we have
\begin{align}\label{Th-23.7}
\left|\frac{D^{k+1}f(0)\left(z_0^{k+1}\right)}{(k+1)!}-\xi\frac{D^kf(0)\left(z_0^k\right)}{k!} \right|\leq \exp\left\lbrace \sum\limits_{n=1}^{k}\frac{1}{n}|g_n-\xi^n|^2-\sum\limits_{n=1}^k\frac{1}{n}\right\rbrace.
\end{align}

By Corollary \cite[pp.178]{Duren-1983}, we can choose $\xi\in\partial \mathbb{U}$ so as to make the exponent in \eqref{Th-23.7} nonpositive, to obatin
\begin{align*}
	\left|\frac{1}{(k+1)!}D^{k+1}f(0)\left(z_0^{k+1}\right)-\xi\frac{1}{k!} D^kf(0)\left(z_0^k\right)\right|\leq 1.
\end{align*}
Therefore,
\begin{align}\label{Th-23.8}
	\displaystyle
	&\left|\;\left|\frac{1}{(k+1)!}D^{k+1}f(0)\left(z_0^{k+1}\right)\right|-\left|\frac{1}{k!} D^kf(0)\left(z_0^k\right)\right|\;\right|\\\leq& 	\left|\frac{1}{k+1}D^{(k+1)!}f(0)\left(z_0^{k+1}\right)-\xi\frac{1}{k!} D^kf(0)\left(z_0^k\right)\right|\leq 1\nonumber
\end{align}
for all $\xi\in\partial \mathbb{U}$.
Since $z_0=z/\|z\|$, using \eqref{Th-21.7}, we deduce from \eqref{Th-23.8} that
\begin{align*}
	\displaystyle
	&\left|\;\left|\frac{l_z\left(D^{k+2}F(0)\left(z^{k+2}\right)\right)}{(k+2)!||z||^{k+2}}\right|-\left|\frac{l_z\left(D^{k+1}F(0)\left(z^{k+1}\right)\right)}{(k+1)!||z||^{k+1}} \right|\;\right|\leq 1,\quad z\in\mathbb{B}\backslash\{0\}\nonumber
\end{align*}
for $k\geq 1$.\vspace{2mm}

To establish the sharpness of the bound, let us consider the function $F$ defined as in Theorem \ref{Th-22}.
Putting $z=ru\;(0<r<1)$ into \eqref{Th-22.11}, we get
\begin{align*}
\frac{l_z\left(D^{k+1}F(0)\left(z^{k+1}\right)\right)}{(k+1)!||z||^{k+1}}=k+1,\quad \frac{D^{k}F(0)\left(z^{k}\right)}{k!||z||^k}=k
\end{align*}
and so equality of \eqref{Th-23t} holds.
\end{proof}

\section{{\bf Coefficient theorems of quasi-convex mappings of type $B$ in complex Banach space}}\label{Sec-3}

Let $X$ and $Y$ be two complex Banach spaces and $\Omega\subset X$ be a domain.
Also let $f:\Omega\rightarrow Y$ be a holomorphic mapping. We say that $f$ is
\emph{convex} on $\Omega$ if $f$ is biholomorphic on $\Omega$ and
$f(\Omega)$ is a convex domain in $Y$.\vspace{1.2mm}
A normalized locally biholomorphic mapping $f\in\mathcal{H}(\mathbb{B})$ is said to be quasi-convex of type $B$ if (see \cite[Definition 2.6]{Ham2})
\begin{align}\label{C1}
\displaystyle
\Re\left\lbrace l_z\left([Df(z)]^{-1}\left(D^2f(z)(z,z)+Df(z)z\right)\right)\right\rbrace>0,\quad z\in\mathbb{B},~l_z\in T(z).
\end{align}
If $\mathbb{B}$ is the unit ball of $X$, we let $\mathcal{K}_B(\mathbb{B})$ denote the subset of $\mathcal{S}(\mathbb{B})$
consisting of normalized quasi-convex mappings of type $B$ from $\mathbb{B}$ into $X$.\vspace{1.2mm}

Convex mappings in higher dimensions have been extensively studied in recent literature. Comprehensive overviews can be found in the monographs by Gong \cite{Gong} and Kohr \& Liczberski  \cite{Kohr- Liczberski-1998}. For foundational work on this topic, see \cite{Ham4},
\cite{Roper-Suffridge-1999}-\cite{Su3}, \cite{Xu-Dl-Li}, \cite{Xu-Yang-Liu-Xu-2015}, \cite{Zhang-Liu-2002}.
\vspace{1.2mm}
By considering the biholomorphic mappings $F:\mathbb{B}\to X$ of the form $F(z)=f(z)z$ for $z\in\mathbb{B}$, where $f:\mathbb{B}\to \mathbb{C}$ is a holomorphic function satisfying $f(0)=1$, we begin by providing a sufficient condition on the coefficients of a function $F$ to be quasi-convex in a complex Banach space.
 \begin{theo}\label{Th-31}Let $f\in\mathcal{H}(\mathbb{B},\mathbb{C})$ with $f(0)=1$ and $F(z)=zf(z)\in\mathcal{S}(\mathbb{B})$. If
\begin{align*}	
\sum\limits_{n=2}^{\infty}n^2\left|l_{z}\left(\frac{D^{n}F(0)(z^{n})}{n!||z||^n}\right)\right|\leq 1
\end{align*}	
for $z\in\mathbb{B}\backslash\{0\}$ and $l_z\in T(z)$, then $F(z)=zf(z)\in\mathcal{K}_B(\mathbb{B})$.	
\end{theo}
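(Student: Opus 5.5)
Fix $z\in\mathbb{B}\setminus\{0\}$, put $z_0=z/\|z\|$, and reduce the quasi-convexity condition \eqref{C1} to a one-variable statement along the slice $\zeta\mapsto\zeta z_0$, exactly as in the proof of Theorem \ref{Th-21}. Write $h(\zeta)=f(\zeta z_0)$ and $a_{k+1}=l_{z_0}\big(D^{k+1}F(0)(z_0^{k+1})/(k+1)!\big)$. Then, by \eqref{Th-21.7}--\eqref{Th-21.8}, $h(\zeta)=1+\sum_{k\ge1}a_{k+1}\zeta^k$. Set $\phi(\zeta)=\zeta h(\zeta)=\zeta+\sum_{n\ge2}a_n\zeta^n$. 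The hypothesis, evaluated at $z_0$, gives $\sum_{n\ge2}n^2|a_n|\le1$.

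The first step is to compute the quantity in \eqref{C1} for $F(z)=zf(z)$. Since $F$ is homogeneous along rays, we have $D^2F(w)(w,w)+DF(w)w=\frac{d}{dt}\big[DF(tw)(tw)\big]\big|_{t=1}$. With $w=\zeta z_0$ one gets $DF(w)w=\big(h(\zeta)+\zeta h'(\zeta)\big)w=\phi'(\zeta)w$ and $D^2F(w)(w,w)+DF(w)w=\big(\phi'(\zeta)+\zeta\phi''(\zeta)\big)w$. Both vectors are scalar multiples of $w$, and by \eqref{Th-21.1}--\eqref{Th-21.2} the operator $[DF(w)]^{-1}$ maps $w$ to $w/\phi'(\zeta)$. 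Hence
\begin{align*}
[DF(w)]^{-1}\big(D^2F(w)(w,w)+DF(w)w\big)=\Big(1+\frac{\zeta\phi''(\zeta)}{\phi'(\zeta)}\Big)w.
\end{align*}
Applying $l_w=\frac{|\zeta|}{\zeta}l_{z_0}$ gives $\|w\|\big(1+\zeta\phi''(\zeta)/\phi'(\zeta)\big)$. Condition \eqref{C1} at $z$ therefore reduces to
\begin{align*}
\Re\left\{1+\frac{\zeta\phi''(\zeta)}{\phi'(\zeta)}\right\}>0 \quad\text{at } \zeta=\|z\|.
\end{align*}
This is the classical convexity functional of $\phi$. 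I would verify the derivative identities carefully using \eqref{Th-21.6}, or directly via $D(zf(z))=f(z)I+z\,Df(z)(\cdot)$ and one further differentiation. I expect this to be the main technical step, because the second Fréchet derivative of $F(z)=zf(z)$ has several terms. Restricting to vectors parallel to $w$ should collapse them to the radial expression above.

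The second step is the one-variable estimate, which mimics Theorem \ref{Th-21} (the Clunie–Keogh argument) applied to $\zeta\phi'$. Let $G(\zeta)=\zeta\phi'(\zeta)=\zeta+\sum_{n\ge2} n a_n\zeta^n$. Then $\zeta G'/G=1+\zeta\phi''/\phi'$, and it suffices to show $|\zeta G'(\zeta)-G(\zeta)|<|G(\zeta)|$ for $0<|\zeta|<1$. This holds because
\begin{align*}
|\zeta G'-G|-|G|<|\zeta|\Big(\sum_{n\ge2}n(n-1)|a_n|-1+\sum_{n\ge2}n|a_n|\Big)=|\zeta|\Big(\sum_{n\ge2}n^2|a_n|-1\Big)\le0.
\end{align*}
The strict inequality comes from $|\zeta|^{n-1}<1$ whenever some $a_n\ne0$; if all $a_n=0$, then $\phi(\zeta)=\zeta$ and the claim is trivial. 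It follows that $|\zeta G'/G-1|<1$, so $\Re\{1+\zeta\phi''/\phi'\}>0$. The same bound shows $\phi'(\zeta)\neq0$, since $|\phi'(\zeta)|\ge1-\sum n|a_n|>0$, which justifies dividing by $\phi'$.

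Finally, substitute $\zeta=\|z\|$ and use $l_{\alpha z}=\frac{|\alpha|}{\alpha}l_z$ to return to arbitrary $l_z\in T(z)$. This yields \eqref{C1} for every $z\ne0$. At $z=0$ the condition is vacuous or holds by the normalization, and local biholomorphy of $F$ follows from $F\in\mathcal{S}(\mathbb{B})$. Hence $F\in\mathcal{K}_B(\mathbb{B})$.
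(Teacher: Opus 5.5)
Your proposal is correct and is essentially the paper's proof: your functional $1+\zeta\phi''(\zeta)/\phi'(\zeta)$ is exactly the ratio $g(\zeta)=\bigl(1+\sum_{k\ge2}k^2a_k\zeta^{k-1}\bigr)/\bigl(1+\sum_{k\ge2}ka_k\zeta^{k-1}\bigr)$ of \eqref{Th-31.8}, and your bound on $|\zeta G'-G|-|G|$ is the paper's Clunie--Keogh estimate on $|g\psi-\psi|-|\psi|$ multiplied by $|\zeta|$, followed by the same substitution $\zeta=\|z\|$. Your radial derivation of $D^2F(w)(w,w)+DF(w)w=(\phi'+\zeta\phi'')w$ is a tidy alternative to the paper's power-series computation and shows that the choice of $l_z\in T(z)$ is irrelevant; the only cosmetic slip is reading the hypothesis ``at $z_0$'', which lies outside $\mathbb{B}$, but this is harmless since the expression is homogeneous of degree zero in $z$.
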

In the next result, we show that the Bieberbach conjecture holds for any function $F(z)=zf(z) \in \mathcal{K}_B(\mathbb{B})$.

\begin{theo}\label{Th-32}Let $f\in\mathcal{H}(\mathbb{B},\mathbb{C})$ with $f(0)=1$ and $F(z)=zf(z)\in\mathcal{K}_B(\mathbb{B})$. Then for for $z\in\mathbb{B}\backslash\{0\}$ and $l_z\in T(z)$, we have
\begin{align*}
\left|l_{z}\left(\frac{D^{k}F(0)(z^{k})}{k!||z||^k}\right)\right|\leq 1
\end{align*}
and
\begin{align}\label{Th-32t}
n^2(n+1)^2\left|l_{z}\left(\frac{D^{n}F(0)(z^{n})}{n!||z||^n}\right)\right|^2\leq 4\left(1+\sum\limits_{k=2}^{n}k^3	\left|l_{z}\left(\frac{D^{k}F(0)(z^{k})}{k!||z||^k}\right)\right|^2\right).	
\end{align}
Equality holds for the function $F(z)=f(z)z$, where $f(z)=(1-l_u(z))^{-1}$, $z\in\mathbb{B}$ and $||u||=1$.
\end{theo}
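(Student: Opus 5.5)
We will mirror the proof of Theorem \ref{Th-22}, but with the one-variable quantity attached to quasi-convexity of type $B$ in place of $g$. Fix $z\in\mathbb{B}\setminus\{0\}$ and set $z_0=z/\|z\|$. Write $h(\zeta)=f(\zeta z_0)$ and $a_{k+1}$ as in \eqref{Th-22.7}, so that \eqref{Th-21.8} gives $h(\zeta)=1+\sum_{k\ge1}a_{k+1}\zeta^k$. Then the slice function $\phi(\zeta)=\zeta h(\zeta)=l_{z_0}(F(\zeta z_0))$ has Taylor coefficients $1,a_2,a_3,\dots$.

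\textbf{Reduction to the disc.} Using \eqref{Th-21.1} and $F(z)=zf(z)$, compute $D^2F(z)(z,z)=2(Df(z)z)z+(D^2f(z)(z,z))z$ and $DF(z)z=(f(z)+Df(z)z)z$. Both vectors are multiples of $z$, so \eqref{Th-21.1} applied to $\eta=cz$ gives $[DF(z)]^{-1}(cz)=cz/(f(z)+Df(z)z)$. Hence
\begin{align*}
[DF(z)]^{-1}\left(D^2F(z)(z,z)+DF(z)z\right)=\frac{f(z)+3Df(z)z+D^2f(z)(z,z)}{f(z)+Df(z)z}\,z .
\end{align*}
With $z=\zeta z_0$ and $l_{\zeta z_0}=\frac{|\zeta|}{\zeta}l_{z_0}$, we have $\zeta h'(\zeta)=Df(\zeta z_0)(\zeta z_0)$ and $\zeta^2h''(\zeta)=D^2f(\zeta z_0)(\zeta z_0,\zeta z_0)$. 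The numerator then equals $(\zeta\phi')'$ and the denominator equals $\phi'$. Condition \eqref{C1} therefore becomes
\begin{align*}
p(\zeta):=1+\frac{\zeta\phi''(\zeta)}{\phi'(\zeta)}=\frac{(\zeta\phi'(\zeta))'}{\phi'(\zeta)},\qquad \Re\{p(\zeta)\}>0\ \text{ on }\mathbb{U}\setminus\{0\},
\end{align*}
with $p(0)=1$, so $p\in\mathcal{P}$. This is the analogue of \eqref{Th-21.3}. Equivalently, $\psi=\zeta\phi'$ satisfies $\zeta\psi'/\psi=p$, with $\psi$ having coefficients $k a_k$.

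\textbf{Coefficient bound.} Write $p=1+\sum g_k\zeta^k$ and equate coefficients in $(\zeta\phi')'=p\,\phi'$. This gives
\begin{align*}
k(k+1)a_{k+1}=\sum_{i=0}^{k-1}(i+1)a_{i+1}\,g_{k-i}\qquad(a_1=1),
\end{align*}
the exact analogue of \eqref{Th-22.4} with $a_i$ replaced by $ia_i$. Assume inductively that $|a_i|\le1$ for $i\le k$ and use $|g_j|\le2$ from Lemma \ref{Lem1}. Then $k(k+1)|a_{k+1}|\le2\sum_{i=1}^{k}i=k(k+1)$, so $|a_{k+1}|\le1$, which is the first claim.

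\textbf{The inequality \eqref{Th-32t}.} Since the numbers $ka_k$ are precisely the coefficients of $\psi$, and $\psi$ satisfies $\Re\{\zeta\psi'/\psi\}>0$, we repeat the Schwarz-function argument of Theorem \ref{Th-22} for $\psi$. Write $p=(1+\omega)/(1-\omega)$ to obtain
\begin{align*}
\left(2\psi+\zeta\psi'-\psi\right)\omega=\zeta\psi'-\psi,
\end{align*}
whose coefficient form is \eqref{Th-22.6} with $a_{k+1}$ replaced by $(k+1)a_{k+1}$. Truncate as in \eqref{Th-22.8}, apply Parseval on $|\zeta|=r$ using $|\omega|<1$, and let $r\to1$. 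This yields $n^2(n+1)^2|a_{n+1}|^2\le4\big(1+\sum_{i=2}^{n}i\cdot i^2|a_i|^2\big)$. Reindexing and adding $4m^3|a_m|^2$ to both sides, as at the end of Theorem \ref{Th-22}, gives \eqref{Th-32t}.

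\textbf{Sharpness.} For $f=(1-l_u(z))^{-1}$ we get $F(z)=\sum_k (l_u(z))^k z$. Taking $z=ru$ gives $a_k=1$ for all $k$, so both inequalities are equalities. Membership $F\in\mathcal{K}_B(\mathbb{B})$ should be checked directly from \eqref{C1} via the formula in the reduction step: the slice function is $\phi=\zeta/(1-c\zeta)$ with $|c|\le1$, which is convex, so $p$ has positive real part.

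\textbf{Main obstacle.} The main obstacle is the reduction step: verifying that the Fréchet derivatives of $F=zf$ collapse exactly to $(\zeta\phi')'/\phi'$, including the handling of $l_{\zeta z_0}$ and the value at $\zeta=0$. Once this reduction holds, everything else follows from the starlike case applied to $\psi=\zeta\phi'$, an Alexander-type correspondence.
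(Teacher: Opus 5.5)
Your proposal is correct and follows essentially the same route as the paper. Your $p=(\zeta\phi')'/\phi'$ is exactly the function $g$ of \eqref{Th-31.8}, your recursion and induction are \eqref{Th-32.3}, and your relation $(\psi+\zeta\psi')\omega=\zeta\psi'-\psi$ is \eqref{Th-32.4}, after which both arguments use the same Parseval estimate and add $4n^3|a_n|^2$; the only deviation is in the sharpness part, where you verify \eqref{C1} for $z/(1-l_u(z))$ directly instead of citing Lemma \ref{Lem2}, which works provided you also note injectivity (needed since $\mathcal{K}_B(\mathbb{B})\subset\mathcal{S}(\mathbb{B})$), and this is immediate because $F(z)=F(w)$ forces $z=tw$ and then $t=1$.
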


The next result concerns the difference of coefficients for functions $F(z)=zf(z) \in \mathcal{K}_B(\mathbb{B})$.

 \begin{theo}\label{Th-33}Let $f\in\mathcal{H}(\mathbb{B},\mathbb{C})$ with $f(0)=1$ and $F(z)=zf(z)\in\mathcal{K}_B(\mathbb{B})$. Then
\begin{align*}\
\displaystyle
&\left|\;\left|\frac{l_z\left(D^{k+1}F(0)\left(z^{k+1}\right)\right)}{(k+1)!||z||^{k+1}}\right|-\left|\frac{l_z\left(D^{k}F(0)\left(z^{k}\right)\right)}{k!||z||^{k}} \right|\;\right|\leq 1,\quad z\in\mathbb{B}\backslash\{0\}
\end{align*}
for $k=2,3$.
\end{theo}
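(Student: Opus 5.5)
The plan is to reduce the statement to a one-variable problem on a fixed complex line through the origin. On that line the quasi-convexity of type $B$ of $F$ becomes ordinary convexity of a normalized function in $\mathbb{U}$. Theorem D then bounds the coefficients, and the difference estimate is immediate.

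\textbf{Step 1 (reduction).} Fix $z\in\mathbb{B}\setminus\{0\}$, put $z_0=z/\|z\|$, and define
\begin{align*}
h(\zeta)=f(\zeta z_0),\qquad \varphi(\zeta)=\zeta h(\zeta)=\zeta+\sum_{n=2}^{\infty}a_n\zeta^n,\qquad \zeta\in\mathbb{U}.
\end{align*}
By \eqref{Th-21.7} and \eqref{Th-21.8}, $a_n=l_{z_0}\bigl(D^nF(0)(z_0^n)/n!\bigr)$. By homogeneity of $D^nF(0)$ and $l_z=l_{z_0}$, this equals $l_z\bigl(D^nF(0)(z^n)\bigr)/(n!\|z\|^n)$. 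So the quantities in Theorem \ref{Th-33} are exactly $\bigl||a_{k+1}|-|a_k|\bigr|$.

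\textbf{Step 2 (convexity of $\varphi$).} Along the ray we have $F(\zeta z_0)=\varphi(\zeta)z_0$. Differentiating $F(z)=f(z)z$ gives:
\begin{itemize}
\item $DF(\zeta z_0)(z_0)=\varphi'(\zeta)z_0$;
\item $DF(\zeta z_0)(\zeta z_0)=\zeta\varphi'(\zeta)z_0$;
\item $D^2F(\zeta z_0)(\zeta z_0,\zeta z_0)=\zeta^2\varphi''(\zeta)z_0$.
\end{itemize}
The first identity shows that $[DF(\zeta z_0)]^{-1}$ sends $z_0$ to $z_0/\varphi'(\zeta)$; alternatively one can use formula \eqref{Th-21.1} directly. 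Hence
\begin{align*}
[DF(\zeta z_0)]^{-1}\bigl(D^2F(\zeta z_0)(\zeta z_0,\zeta z_0)+DF(\zeta z_0)(\zeta z_0)\bigr)=\zeta\Bigl(1+\frac{\zeta\varphi''(\zeta)}{\varphi'(\zeta)}\Bigr)z_0 .
\end{align*}
Now apply $l_{\zeta z_0}=\frac{|\zeta|}{\zeta}l_{z_0}$, as in the proof of Theorem \ref{Th-21}. Condition \eqref{C1} then gives
\begin{align*}
|\zeta|\,\Re\Bigl\{1+\frac{\zeta\varphi''(\zeta)}{\varphi'(\zeta)}\Bigr\}>0,\qquad \zeta\in\mathbb{U}\setminus\{0\},
\end{align*}
and the expression equals $1$ at $\zeta=0$. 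Since $F$ is locally biholomorphic, $\varphi'(\zeta)\neq0$. Therefore $\varphi\in\mathcal{K}(\mathbb{U})$.

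\textbf{Step 3 (conclusion).} By Theorem D, $|a_n|\le 1$ for every $n\ge2$. Since $|a_{k+1}|$ and $|a_k|$ both lie in $[0,1]$, we get $\bigl||a_{k+1}|-|a_k|\bigr|\le 1$. For $k=2$ one could instead quote Theorem H directly, but the argument above already covers $k=2,3$, and in fact every $k\ge2$. The same bound $|a_n|\le1$ also yields the first inequality of Theorem \ref{Th-32}.

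\textbf{Main obstacle.} The only delicate point is Step 2: justifying the inverse of $DF$ applied to vectors parallel to $z_0$, and handling the rotation of the support functional $l_{\zeta z_0}$ for complex $\zeta$. This is a careful bookkeeping exercise modelled on the proof of Theorem \ref{Th-21}.
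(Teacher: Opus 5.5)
Your proof is correct. It shares the paper's final step, the trivial estimate $\bigl||a_{k+1}|-|a_k|\bigr|\le\max\{|a_{k+1}|,|a_k|\}\le 1$, but you obtain the bound $|a_n|\le 1$ by a different key lemma.

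The paper never identifies the slice function as a classical convex function. It works with the Carathéodory function $g$ of \eqref{Th-31.0} and solves \eqref{Th-33.1} explicitly for $a_2=p_1/2$, $a_3=(p_2+p_1^2)/6$ and $a_4=\frac{1}{12}(p_3+\frac32 p_1p_2+\frac12 p_1^3)$. It then bounds each of these by $1$ using only $|p_n|\le 2$ from Lemma \ref{Lem1}.

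Your Step 2 shows that this $g$ is exactly $1+\zeta\varphi''(\zeta)/\varphi'(\zeta)$, which also justifies \eqref{Th-31.8} more transparently than the paper does. That identification lets you import Theorem D wholesale.

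What your route buys is generality and clarity. It proves the inequality for every $k\ge 2$, so the restriction to $k=2,3$ is only an artifact of the paper computing coefficients by hand. Indeed, the paper's own first inequality in Theorem \ref{Th-32} would give all $k$ as well. It also makes plain that the statement is purely one-variable. What the paper's route buys is self-containedness: it needs only the Carathéodory coefficient bound, not the classical coefficient theorem for convex functions.

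One small point to tighten: Theorem D is stated for $\mathcal{K}(\mathbb{U})\subset\mathcal{S}(\mathbb{U})$, so you need $\varphi$ to be univalent. You can either cite the classical fact that a locally univalent normalized function with $\Re\{1+\zeta\varphi''/\varphi'\}>0$ is univalent and convex. Or note directly that $\varphi(\zeta_1)=\varphi(\zeta_2)$ forces $F(\zeta_1 z_0)=F(\zeta_2 z_0)$, hence $\zeta_1=\zeta_2$, because $F$ is biholomorphic.
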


\subsection{{Auxiliary lemma}}

The following Lemma is due to Liu and Liu \cite{Liu-Liu-2007}.
\begin{lem}\label{Lem2} {\em{\cite{Liu-Liu-2007}}}
Suppose that $f\in \mathcal{S}(\mathbb{U})$. Then $F$ defined by $F(z)=\frac{f(l_u(z))}{l_u(z)}z$, where $z\in\mathbb{B}$, $||u||=1$ belongs $\mathcal{K}_B(\mathbb{B})$ if and only if $f\in\mathcal{K}(\mathbb{U})$.
\end{lem}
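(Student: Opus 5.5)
The plan is to compute the quasi-convexity functional in \eqref{C1} explicitly for $F(z)=g(z)z$ with $g(z)=f(l_u(z))/l_u(z)$, and to reduce it to the one-variable quantity $1+\zeta f''(\zeta)/f'(\zeta)$.

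Set $h(\zeta)=f(\zeta)/\zeta$, which is holomorphic on $\mathbb{U}$ with $h(0)=1$, and write $t=l_u(z)$, so that $g(z)=h(t)$. Since $l_u$ is linear,
\begin{align*}
Dg(z)z=t\,h'(t),\qquad D^2g(z)(z,z)=t^2h''(t).
\end{align*}
Differentiating $F(z)=g(z)z$ gives
\begin{align*}
DF(z)z=\bigl(g(z)+Dg(z)z\bigr)z,\qquad D^2F(z)(z,z)=\bigl(2Dg(z)z+D^2g(z)(z,z)\bigr)z .
\end{align*}
Hence $D^2F(z)(z,z)+DF(z)z=\lambda(z)\,z$, where
\begin{align*}
\lambda(z)=h(t)+3t h'(t)+t^2h''(t).
\end{align*}

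Next apply the inverse formula \eqref{Th-21.1}, with $g$ in place of $f$, to $\eta=z$. This gives
\begin{align*}
[DF(z)]^{-1}z=\frac{z}{g(z)+Dg(z)z}.
\end{align*}
Therefore
\begin{align*}
l_z\Bigl([DF(z)]^{-1}\bigl(D^2F(z)(z,z)+DF(z)z\bigr)\Bigr)=\|z\|\,\frac{h(t)+3th'(t)+t^2h''(t)}{h(t)+th'(t)}.
\end{align*}
Since $f=\zeta h$, we have $f'=h+\zeta h'$ and $\zeta f''=2\zeta h'+\zeta^2h''$. So the numerator equals $f'(t)+tf''(t)$, the denominator equals $f'(t)$, and the functional becomes
\begin{align*}
\|z\|\left(1+\frac{t f''(t)}{f'(t)}\right).
\end{align*}
The same computation shows $\det$-type invertibility of $DF(z)$, i.e. local biholomorphy, amounts to $h(t)\neq0$ and $f'(t)\neq0$. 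Both hold for $f\in\mathcal{S}(\mathbb{U})$, because $f$ is univalent and vanishes only at $0$.

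It remains to pass between the two settings. The map $z\mapsto l_u(z)$ sends $\mathbb{B}$ into $\mathbb{U}$, because $|l_u(z)|\le\|z\|<1$. It is onto $\mathbb{U}$, since $z=\zeta u$ gives $l_u(z)=\zeta$. Hence condition \eqref{C1} for $F$ on $\mathbb{B}\setminus\{0\}$ is equivalent to
\begin{align*}
\Re\{1+\zeta f''(\zeta)/f'(\zeta)\}>0 \quad \text{for all } \zeta\in\mathbb{U}.
\end{align*}
This holds at $\zeta=0$ trivially, and it is exactly the statement $f\in\mathcal{K}(\mathbb{U})$. The point $z=0$ in \eqref{C1} is harmless. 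If one reads $T(0)$ as empty or the functional as $0$ there, one interprets \eqref{C1} on $\mathbb{B}\setminus\{0\}$, as is standard.

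The main obstacle I expect is the bookkeeping around the definition of $\mathcal{K}_B(\mathbb{B})$ as a subset of $\mathcal{S}(\mathbb{B})$. For the direction $f\in\mathcal{K}(\mathbb{U})\Rightarrow F\in\mathcal{K}_B(\mathbb{B})$ we also need $F$ to be biholomorphic, not merely locally biholomorphic. I would handle this by noting that quasi-convex mappings of type $B$ are starlike, or directly: $F$ is starlike by Proposition A, because
\begin{align*}
l_z\bigl([DF(z)]^{-1}F(z)\bigr)=\|z\|\,\frac{f(t)}{tf'(t)},
\end{align*}
and $\Re\{tf'(t)/f(t)\}>0$ for convex $f$. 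Starlike locally biholomorphic mappings are biholomorphic. The converse direction uses only the computation above.
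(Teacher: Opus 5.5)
The paper does not prove this lemma; it quotes it from Liu and Liu, so there is no in-paper argument to compare yours with. Your proposal is a correct, self-contained proof. The identities $D^2F(z)(z,z)+DF(z)z=\lambda(z)z$ and $[DF(z)]^{-1}z=z/f'(l_u(z))$ are right. The functional in \eqref{C1} does not depend on the choice of $l_z\in T(z)$ and equals $\|z\|\left(1+tf''(t)/f'(t)\right)$ with $t=l_u(z)$. The slice $z=\zeta u$ shows $l_u(\mathbb{B}\setminus\{0\})\supseteq\mathbb{U}\setminus\{0\}$, which gives both implications.

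Two small points.

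First, you can obtain $F\in\mathcal{S}(\mathbb{B})$ more cheaply, and it holds for every $f\in\mathcal{S}(\mathbb{U})$ with no convexity needed. You have $F(0)=0$ and $DF(0)=I$ because $h(0)=1$; you should state this normalization explicitly. If $F(z_1)=F(z_2)$, applying $l_u$ gives $f(l_u(z_1))=f(l_u(z_2))$. Univalence of $f$ then gives $l_u(z_1)=l_u(z_2)$, and $z_1=z_2$ follows since $h$ does not vanish. Injectivity together with the bounded inverse of $DF(z)$ from your explicit formula gives biholomorphy via the inverse function theorem. So neither Proposition A nor the fact that convex functions are starlike is needed, and the unproved aside that quasi-convex mappings of type $B$ are starlike can simply be dropped. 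The lemma then reduces purely to the equivalence of \eqref{C1} with $\Re\{1+\zeta f''(\zeta)/f'(\zeta)\}>0$ on $\mathbb{U}$, which your computation establishes.

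Second, your claim that local biholomorphy "amounts to" $h(t)\neq 0$ and $f'(t)\neq 0$ is exact only when $\dim X\geq 2$. For $X=\mathbb{C}$ one has $F(z)=uf(\bar u z)$, and only $f'\neq 0$ matters. You only use the sufficiency, which your explicit inverse formula does establish.
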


\begin{proof}[\bf Proof of Theorem \ref{Th-31}]
By the given condition, we have $f\in\mathcal{H}(\mathbb{B},\mathbb{C})$ with $f(0)=1$ and $F(z)=zf(z)\in\mathcal{S}(\mathbb{B})$.	
Let $z\in \mathbb{B}\backslash\{0\}$ be fixed and let $z_0=z/\|z\|$. We define the function $g:\mathbb{U}\to \mathbb{C}$ as follows
\begin{align}\label{Th-31.0}	
\displaystyle
g(\zeta)=\begin{cases}
\frac{l_{z}\left([DF(\zeta z_0)]^{-1}\left(D^2 F(\zeta z_0)(\zeta^2z_0^2)+DF(\zeta z_0) \zeta z_0\right)\right)}{\zeta},& \zeta\neq 0,\\[8pt]
1,& \zeta=0.
\end{cases}
\end{align}
Let $\phi(z)=D^2 F(z)(z^2)+DF(z)z$.		
Since $F(z)$ is normalized, it has the power series expansion
\begin{align}\label{Th-31.2}
\displaystyle
F(z)=z+\sum\limits_{k=2}^{\infty}\frac{1}{k!}D^k F(0)(z^k).
\end{align}
Clearly
\begin{align}\label{Th-31.3}
\displaystyle
D F(z)=I+\sum\limits_{k=2}^{\infty}\frac{k}{k!}D^k F(0)(z^{k-1})
\end{align}
and
\begin{align}\label{Th-31.4}
\displaystyle
D^2 F(z)=\sum\limits_{k=2}^{\infty}\frac{k(k-1)}{k!}D^k F(0)(z^{k-2})
\end{align}
Using \eqref{Th-31.3} and \eqref{Th-31.4}, we get
\begin{align*}
\phi(z)=z+\sum\limits_{k=2}^{\infty}\frac{k^2}{k!}D^k F(0)(z^{k})
\end{align*}
and so
\begin{align}\label{Th-31.5}
l_{z_0}(\phi(\zeta z_0))=\zeta+\sum\limits_{k=2}^{\infty}k^2a_k\zeta^k,
\end{align}	
where
\begin{align}\label{Th-31.5a}
a_k=l_{z_0}\left(\frac{1}{k!}D^3F(0)(z_0^3)\right).
\end{align}	
On the other hand, from \eqref{Th-21.8} and \eqref{Th-21.9}, we obtain
\begin{align}\label{Th-31.6}
\psi(\zeta)=f(\zeta z_0)+D f(\zeta z_0)\zeta z_0=1+\sum\limits_{k=1}^{\infty}(k+1)a_{k+1}\zeta^k=1+\sum\limits_{k=2}^{\infty}ka_{k}\zeta^{k-1}
\end{align}
Again \eqref{Th-21.1} gives
\begin{align}\label{Th-31.7}
\displaystyle
[D F(z)]^{-1}\phi(z)=\frac{\phi(z)}{f(z)+D f(z)z}.
\end{align}	
In view of \eqref{Th-31.0} and using \eqref{Th-31.5}, \eqref{Th-31.6} to \eqref{Th-31.7}, we obtain
\begin{align}\label{Th-31.8}
\displaystyle
g(\zeta)=\frac{1+\sum\limits_{k=2}^{\infty}k^2a_k\zeta^{k-1}}{1+\sum\limits_{k=2}^{\infty}ka_{k}\zeta^{k-1}}.	
\end{align}
Now applying \eqref{Th-31.6} and \eqref{Th-31.8}, we have
\begin{align*}
\left|g(\zeta)\psi(\zeta)-\psi(\zeta)\right|-|\psi(\zeta)|=&\left|\sum\limits_{k=2}^{\infty}(k^2-k)a_k\zeta^{k-1}\right|-\left|1+\sum\limits_{k=2}^{\infty}ka_{k}\zeta^{k-1}\right|\\<&
\sum\limits_{k=2}^{\infty}(k^2-k)|a_k|-\left(1-\sum\limits_{k=2}^{\infty}k|a_{k}|\right)=
\sum\limits_{k=2}^{\infty}k^2|a_k|-1\leq 0
\end{align*}
and so for $\zeta\in\mathbb{U}\backslash\{0\}\}$, we have $\left|g(\zeta)-1\right|<1.$ This shows that $g(\zeta)$ lies inside a circle with center at $(1,0)$ and radius $1$ and so $\Re\{g(\zeta)\}>0$ for all $\zeta\in\mathbb{U}\backslash\{0\}$, i.e.,
\begin{align}\label{Th-31.9}
\Re\left\lbrace \frac{l_{z_0}\left([D F(\zeta z_0)]^{-1}\phi(\zeta z_0)\right)}{\zeta}\right\rbrace>0
\end{align}
for all $\zeta\in\mathbb{U}\backslash\{0\}$. Substituting $\zeta=||z||$ into \eqref{Th-31.9} and using the fact that
$l_{\alpha z}=\frac{|\alpha|}{\alpha}l_z$
for each $\alpha\in\mathbb{U}\backslash\{0\}$, we obtain
\begin{align*}
	\Re\left\lbrace \frac{l_{z}\left([D F(z)]^{-1}\left(D^2 F(z)(z^2)+DF(z) z\right)\right)}{||z||}\right\rbrace>0,\quad z\in\mathbb{B}\backslash\{0\}, \quad l_z\in T(z).
\end{align*}

Consequently \eqref{C1} yields $F\in \mathcal{K}_B(\mathbb{B})$.
\end{proof}

\begin{proof}[\bf Proof of Theorem \ref{Th-32}]
By the given condition, we have $F\in\mathcal{K}_B(\mathbb{B})$ and so using \eqref{C1}, we must have
\begin{align*}
\displaystyle
\Re\left\lbrace \frac{l_z\left([DF(z)]^{-1}\left(D^2 F(z)(z^2)+DF(z) z\right)\right)}{||z||}\right\rbrace>0, \quad z\in\mathbb{B}\backslash\{0\},\quad l_z\in T(z),
\end{align*}
where $\phi(z)=D^2 F(z)(z^2)+DF(z) z$.	
Let $z\in \mathbb{B}\backslash\{0\}$ be fixed and let $z_0=z/\|z\|$. We consider the function $g:\mathbb{U}\to \mathbb{C}$ defined by \eqref{Th-31.0}.
Clearly $g\in\mathcal{H}(\mathbb{U})$ and $g(0)=1$.
Therefore for $\zeta\in\mathbb{U}\backslash\{0\}$, we have
\begin{align*}
\displaystyle
g(\zeta)=\frac{l_{\zeta z_0}\left([DF(\zeta z_0)]^{-1}\left(D^2 F(\zeta z_0)(\zeta^2z_0^2)+DF(\zeta z_0) \zeta z_0\right)\right)}{||\zeta z_0||}.
\end{align*}	
Consequently $\Re\{g(\zeta)\}>0$ for all $\zeta\in\mathbb{U}$ and so we may assume that
\begin{align}\label{Th-32.1}
\displaystyle
g(\zeta)=1+p_1\zeta+p_2\zeta^2+\ldots
\end{align}
Now using \eqref{Th-31.8} and \eqref{Th-32.1}, we get
\begin{align}\label{Th-32.2}
\displaystyle
\left(1+\sum\limits_{k=1}^{\infty}p_k\zeta^k\right)\left(1+\sum\limits_{k=2}^{\infty}ka_{k}\zeta^{k-1}\right)=1+\sum\limits_{k=2}^{\infty}k^2a_k\zeta^{k-1},	
\end{align}
where $a_k$ is defined by \eqref{Th-31.5a}.
Equating coefficients on the both sides of \eqref{Th-32.2}, we get
\begin{align}\label{Th-32.3}
k^2a_k=ka_k+(k-1)p_1a_{k-1}+\ldots+2p_{k-2}a_2+p_{k-1}.
\end{align}
We now use mathematical induction and suppose $|a_i|\leq 1$ for $i=2,3,\ldots, k-1$.
Applying Lemma \ref{Lem1} to \eqref{Th-32.3}, we get
\begin{align*}
(k^2-k)|a_k|\leq 2\sum\limits_{i=1}^{k-1}i=k(k-1),
\end{align*}
which shows that $|a_k|\leq 1$.
Then by induction, we have the first inequality
\begin{align*}
	\left|l_{z_0}\left(\frac{1}{k!}D^{k}F(0)(z_0^{k})\right)\right|\leq 1.
\end{align*}
Since $\Re\{g(\zeta)\}>0$, there exists a Schwarz function $\omega(\zeta)$ with $\omega(\zeta)=\sum_{k=1}^{\infty}\omega_k\zeta^k$ such that $g(\zeta)=(1+\omega(\zeta))/(1-\omega(\zeta))$ and so from \eqref{Th-31.8} and \eqref{Th-32.1}, we get
\begin{align}\label{Th-32.4}
\left(2+\sum\limits_{k=2}^{\infty}k(k+1)a_{k} \zeta^{k-1} \right)\omega(\zeta)=\sum\limits_{k=2}^{\infty}k(k-1)a_{k}\zeta^{k-1}.
\end{align}
Equating coefficients on the both sides of \eqref{Th-32.4}, one can easily get
\begin{align*}
	2\omega_{k-1}+6a_2\omega_{k-2}+\ldots+k(k-1)a_{k-1}\omega_1=k(k-1)a_{k}.
\end{align*}
This shows that the coefficient $a_{k}$ depends only on $a_2$, $a_3\ldots$,$a_{k-1}$. Therefore we can rewrite \eqref{Th-32.4} as follows
\begin{align}\label{Th-32.5}
	\left(2+\sum\limits_{k=2}^{n-1}k(k+1)a_{k} \zeta^{k-1} \right)\omega(\zeta)=\sum\limits_{k=2}^{n}k(k-1)a_{k}\zeta^{k-1}+\sum\limits_{k=n+1}^{\infty}b_k\zeta^{k-1}
\end{align}
for some $b_k$.
Let $\zeta=re^{i\theta}$. Then from \eqref{Th-32.5}, we get
\begin{align}\label{Th-32.6}
\left(2+\sum\limits_{k=2}^{n-1}k(k+1)a_{k} r^{k-1}e^{i(k-1)\theta} \right)\omega(re^{i\theta})=&\sum\limits_{k=2}^{n}k(k-1)a_{k}r^{k-1}e^{i(k-1)\theta}\\&+\sum\limits_{k=n+1}^{\infty}b_kr^{k-1}e^{i(k-1)\theta}\nonumber
\end{align}
and
\begin{align}\label{Th-32.7}
\left(2+\sum\limits_{k=2}^{n-1}k(k+1)\ol a_{k} r^{k-1}e^{-i(k-1)\theta} \right)\omega(re^{-i\theta})=&\sum\limits_{k=2}^{n}k(k-1)\ol a_{k}r^{k-1}e^{-i(k-1)\theta}\\&+\sum\limits_{k=n+1}^{\infty}\ol b_kr^{k-1}e^{-i(k-1)\theta}\nonumber.
\end{align}

Note that $|\omega(\zeta)|<1$ for all $\zeta\in\mathbb{U}$. We first multiply \eqref{Th-32.6} and \eqref{Th-32.7} and then integrating with respect to $\theta$ from $0$ to $2\pi$, we can find that
\begin{align*}
\sum\limits_{k=2}^{n}k^2(k-1)^2|a_{k}|^2r^{2(k-1)}+\sum\limits_{k=n+1}^{\infty}|b_k|^2r^{2(k-1)}<4+\sum\limits_{k=2}^{n-1}k^2(k+1)^2|a_{k}|^2r^{2(k-1)}.
\end{align*}
Letting $r\to 1$, we obtain
\begin{align*}
	\sum\limits_{k=2}^{n}k^2(k-1)^2|a_{k}|^2\leq 4+\sum\limits_{k=2}^{n-1}k^2(k+1)^2|a_{k}|^2,
\end{align*}
i.e.,
\begin{align*}
n^2(n-1)^2|a_{n}|^2\leq 4\left(1+\sum\limits_{k=2}^{n-1}k^3|a_{k}|^2\right).
\end{align*}
Now adding $4n^3|a_{n}|^2$ on the both sides and then using \eqref{Th-31.5a}, we get the required conclusion \eqref{Th-32t}.\vspace{1.9mm}

{\bf Sharpness part:} We have $F(z)=\frac{f(l_u(z))}{l_u(z)}z$, where $f(l_u(z))=l_u(z)(1-l_u(z))^{-1}$, $z\in\mathbb{B}$ and $||u||=1$. Clearly $f\in \mathcal{K}(\mathbb{U})$.
According to Lemma \ref{Lem2}, it is clear that $F\in\mathcal{K}_B(\mathbb{B})$. Note that
$F(z)=\sum_{n=0}^{\infty}(l_u(z))^{n}\;z$, $z\in\mathbb{B}$.
A straightforward computation shows that
\begin{align*}
\frac{D^{k}F(0)\left(z^{k}\right)}{k!}=(l_u(z))^{k-1} z.
\end{align*}
Taking $z=ru\;(0<r<1)$, we get
\begin{align*}
\frac{D^{k}F(0)\left(z^{k}\right)}{k!||z||^k}=1
\end{align*}
and so equality of \eqref{Th-32t} holds.
\end{proof}

\begin{proof}[\bf Proof of Theorem \ref{Th-33}]
By the given condition, $F\in\mathcal{K}_B(\mathbb{B})$. Let $z\in \mathbb{B}\backslash\{0\}$ be fixed. Suppose $z_0=z/\|z\|$. We consider the function $g:\mathbb{U}\to \mathbb{C}$ defined by \eqref{Th-31.0}. Clearly
$g\in\mathcal{H}(\mathbb{U})$ with $g(0)=1$ and $\Re\{g(\zeta)\}>0$ for all $\zeta\in\mathbb{U}$.
Now from \eqref{Th-32.2}, we have
\begin{align}\label{Th-33.1}
	\displaystyle
	\left(1+\sum\limits_{k=1}^{\infty}p_k\zeta^k\right)\left(1+\sum\limits_{k=2}^{\infty}ka_{k}\zeta^{k-1}\right)=1+\sum\limits_{k=2}^{\infty}k^2a_k\zeta^{k-1},	
\end{align}
where $a_k$ is defined by \eqref{Th-31.5a}.		
Clearly from \eqref{Th-33.1}, we find that
\begin{align}\label{Th-33.2}
\displaystyle
a_2=\frac{l_z(D^2F(0)(z^2))}{2!\|z\|^2}=\frac{1}{2}p_1,
\end{align}
\begin{align}\label{Th-33.3}
\displaystyle
a_3=\frac{l_{z}(D^3F(0)(z^3))}{3!\|z\|^3}=\frac{1}{6}\left(p_2+p_1^2\right)
\end{align}
and
\begin{align}\label{Th-33.4}
\displaystyle
a_4=\frac{l_{z}(D^4F(0)(z^4))}{4!\|z\|^4}=\frac{1}{12}\left(p_3+\frac{3}{2}p_1p_2+\frac{1}{2}p_1^3\right).
\end{align}
	
We now consider the following two cases.
\medskip
	
{\bf Case 1.} Let $k=2$. Now with the help of \eqref{Th-33.2} and \eqref{Th-33.3}, we find that
\begin{align*}
\displaystyle
\left|\;\left|\frac{l_{z}(D^2F(0)(z^2))}{2!\|z\|^2}\right|-\left|\frac{l_z(D^3F(0)(z^3))}{3!\|z\|^3}\right|\;\right|\leq&
\left|\frac{1}{6}\left|p_2+p_1^2\right|-\frac{1}{2}|p_1|\right|\\\leq& \max\left\lbrace \frac{1}{6}|p_2+p_1^2|,\frac{1}{2}|p_1|\right\rbrace
\end{align*}
and so by Lemma \ref{Lem1}, we have
\begin{align*}
\displaystyle
\left|\;\left|\frac{l_{z}(D^3F(0)(z^3))}{3!\|z\|^3}\right|-\left|\frac{l_z(D^2F(0)(z^2))}{2!\|z\|^2}\right|\;\right|\leq 1.
\end{align*}
\medskip

{\bf Case 2.} Let $k=3$.  Now with the help of \eqref{Th-33.3} and \eqref{Th-33.4}, we find that
\begin{align*}
\displaystyle
&\left|\;\left|\frac{l_{z}(D^4F(0)(z^4))}{4!\|z\|^4}\right|-\left|\frac{l_z(D^3F(0)(z^3))}{3!\|z\|^3}\right|\;\right|\\\leq&
\left|\frac{1}{12}\left|p_3+\frac{3}{2}p_1p_2+\frac{1}{2}p_1^3\right|-\frac{1}{6}\left|p_2+p_1^2\right|\right|\\\leq& \max\left\lbrace \frac{1}{12}\left|p_3+\frac{3}{2}p_1p_2+\frac{1}{2}p_1^3\right|, \frac{1}{6}|p_2+p_1^2|\right\rbrace
\end{align*}
and so by Lemma \ref{Lem1}, we have
\begin{align*}
\left|\;\left|\frac{l_{z}(D^4F(0)(z^4))}{4!\|z\|^4}\right|-\left|\frac{l_z(D^3F(0)(z^3))}{3!\|z\|^3}\right|\;\right|\leq 1.
\end{align*}	
	
\end{proof}

\vspace{4mm}
\noindent\textbf{Conflict of interest:} The authors declare that there is no conflict  of interest regarding the publication of this paper.\vspace{1.2mm}

\noindent {\bf Funding:} Not Applicable.

\noindent\textbf{Data availability statement:}  Data sharing not applicable to this article as no datasets were generated or analysed during the current study.\vspace{1.2mm}


\end{document}